\numberwithin{equation}{section}
\newtheorem{theorem}{Theorem}[section]
\newtheorem{lemma}[theorem]{Lemma}
\newtheorem{prop}[theorem]{Proposition}
\theoremstyle{remark}
\newtheorem{remark}[theorem]{Remark}
\long\def\comment#1{}
\numberwithin{equation}{section}
\begin{document}

\title{Mean value theorems for the double zeta-function}


\author{Kohji Matsumoto}
\address{K. Matsumoto:\ Graduate School of Mathematics, Nagoya University, Chikusa-ku, Nagoya 464-8602 Japan}
\email{kohjimat@math.nagoya-u.ac.jp}

\author{Hirofumi Tsumura}
\address{H. Tsumura:\ Department of Mathematics and Information Sciences, Tokyo Metropolitan University, 1-1, Minami-Ohsawa, Hachioji, Tokyo 192-0397 Japan}
\email{tsumura@tmu.ac.jp}

\subjclass[2010]{Primary 11M32, Secondly 11M06}

\keywords{Double zeta-functions, Mean values, Lindel{\" o}f hypothesis, Euler's constant.}

\date{}

\begin{abstract}
We prove asymptotic formulas for mean square values of the Euler double
zeta-function $\zeta_2(s_0,s)$, with respect to $\Im s$.   Those formulas
enable us to propose a double analogue of the Lindel{\"o}f hypothesis.
\end{abstract}

\maketitle

\ 

\section{Introduction and the statement of results} \label{Sec-1}

Let $\mathbb{N}$ be the set of natural numbers, $\mathbb{N}_0:=\mathbb{N}\cup \{0\}$, $\mathbb{Z}$ the ring of rational integers, $\mathbb{Q}$ the field of rational numbers, $\mathbb{R}$ the field of real numbers, $\mathbb{C}$ the field of complex numbers and $i=\sqrt{-1}$. 

The Euler double zeta-function is defined by 
\begin{equation}
\label{1-1}
\zeta_2(s_1,s_2)=\sum_{m=1}^\infty \frac{1}{m^{s_1}}\sum_{n=1}^\infty \frac{1}{(m+n)^{s_2}}=\sum_{k=2}^\infty \left(\sum_{m=1}^{k-1} \frac{1}{m^{s_1}}\right)\frac{1}{k^{s_2}}
\end{equation}
which is absolutely convergent
for $s_1,s_2 \in \mathbb{C}$ with $\Re s_2> 1$ and $\Re (s_1+s_2)>2$ 
(Theorem 3 in \cite{M1}), and can be continued meromorphically to $\mathbb{C}^2$. 
The singularities are $s_2=1$ and $s_1+s_2=2,1,0,-2,-4,\ldots$
(Theorem 1 in \cite{AET}).
Euler himself considered the behaviour of this function when $s_1,s_2$ are positive
integers.   It was Atkinson \cite{Atk} who first studied \eqref{1-1} from the
analytic viewpoint, and he proved the analytic continuation of it.
Recently the active research of \eqref{1-1} revived, because it is the simplest
example of multiple zeta-functions.   As for the studies on the analytic side of
\eqref{1-1}, for example, upper-bound estimates were discussed in \cite{IM,KT,KTZ}, and functional equations were discovered in \cite{KMT-Debrecen,Mat-Camb}.

It is the purpose of the present paper to prove certain mean square formulas for
\eqref{1-1}.
Let
\begin{equation}
\label{1-2}
\zeta_2^{[2]}(s_1,s_2)=\sum_{k=2}^\infty \left|\sum_{m=1}^{k-1} \frac{1}{m^{s_1}}\right|^2\frac{1}{k^{s_2}}.
\end{equation}
Since the inner sum is $O(1)$ (if $\Re s_1>1$), $O(\log k)$ (if $\Re s_1=1$), or
$O(k^{1-\Re s_1})$ (if $\Re s_1<1$), the series \eqref{1-2} is convergent when 
$\Re s_1\geq 1$ and $\Re s_2>1$, or when $\Re s_1<1$ and $2\Re s_1+\Re s_2>3$. 
Note that $\zeta_2^{[2]}(1,q)$ $(q\in \mathbb{N}_{\geq 2})$ was already studied by Borwein et al. (see \cite{BBG}). 

Hereafter we write $s_0$ and $s$ instead of $s_1$ and $s_2$, respectively, and
consider the mean square with respect to $s$, while $s_0$ is to be fixed.

\begin{theorem} \label{T-1-1} For $s_0={\sigma_0}+i{t_0}\in \mathbb{C}$ with ${\sigma_0}>1$ and $s=\sigma+it\in \mathbb{C}$ with $\sigma>1$, $t\geq 2$, we have 
\begin{equation}
\begin{split}
& \int_{2}^{T}|\zeta_2(s_0,s)|^2 dt =\zeta_2^{[2]}(s_0,2\sigma)T+O(1)\qquad (T\to \infty).
\end{split}
\label{1-3}
\end{equation}
\end{theorem}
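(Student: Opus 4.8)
The plan is to reduce everything to the elementary ``diagonal plus near-diagonal'' analysis of a Dirichlet series in its region of absolute convergence. Since $\sigma_0>1$, the inner sum $A_k:=\sum_{m=1}^{k-1}m^{-s_0}$ satisfies $|A_k|\le\zeta(\sigma_0)$, so $\zeta_2(s_0,s)=\sum_{k\ge 2}A_k k^{-s}$ is an absolutely convergent Dirichlet series for $\sigma>1$. Writing $k^{-s}\overline{l^{-s}}=k^{-\sigma}l^{-\sigma}(l/k)^{it}$ and expanding the square,
\[
|\zeta_2(s_0,s)|^2=\sum_{k\ge 2}\sum_{l\ge 2}A_k\overline{A_l}\,k^{-\sigma}l^{-\sigma}(l/k)^{it}.
\]
Because $\sum_{k\ge 2}|A_k|k^{-\sigma}\le\zeta(\sigma_0)\bigl(\zeta(\sigma)-1\bigr)<\infty$, this double series converges absolutely and uniformly in $t$, and by Fubini's theorem it may be integrated over $[2,T]$ term by term:
\[
\int_2^T|\zeta_2(s_0,s)|^2\,dt=\sum_{k\ge 2}\sum_{l\ge 2}A_k\overline{A_l}\,k^{-\sigma}l^{-\sigma}\int_2^T\Bigl(\frac{l}{k}\Bigr)^{it}\,dt.
\]

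First I would isolate the diagonal $k=l$: it contributes $\sum_{k\ge 2}|A_k|^2k^{-2\sigma}(T-2)=\zeta_2^{[2]}(s_0,2\sigma)(T-2)=\zeta_2^{[2]}(s_0,2\sigma)\,T+O(1)$, which is exactly the asserted main term (the series converges since $2\sigma>2$). For $k\ne l$ one computes
\[
\int_2^T\Bigl(\frac{l}{k}\Bigr)^{it}\,dt=\frac{(l/k)^{iT}-(l/k)^{2i}}{i\log(l/k)},
\]
whose modulus is $\le 2/|\log(l/k)|$, \emph{independently of $T$}. Using $|A_k\overline{A_l}|\le\zeta(\sigma_0)^2$, the off-diagonal contribution is therefore
\[
\ll\sum_{\substack{k,l\ge 2\\ k\ne l}}\frac{1}{k^{\sigma}l^{\sigma}\,|\log(l/k)|},
\]
so it remains only to show this double sum is finite; its value then bounds the off-diagonal part by a constant not depending on $T$, giving the $O(1)$ in \eqref{1-3}.

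For that last sum I would argue by symmetry and take $l=k+r$ with $r\ge 1$. If $r\le k$ then $\log(l/k)=\log(1+r/k)\gg r/k$, so those terms are $\ll k^{1-2\sigma}\sum_{1\le r\le k}r^{-1}\ll k^{1-2\sigma}\log k$, which is summable over $k$ because $2\sigma-1>1$. If $r>k$ then $\log(l/k)>\log 2$, so those terms are $\ll k^{-\sigma}\sum_{l>k}l^{-\sigma}\ll k^{-(2\sigma-1)}$, again summable over $k$. Hence the double sum converges and the theorem follows. The argument involves no real obstacle — it is the classical mean-value computation for a Dirichlet series in its domain of absolute convergence; the only point requiring minor care is the near-diagonal estimate $\log(l/k)\gg |l-k|/\min(k,l)$, which is precisely what keeps the off-diagonal double sum convergent uniformly in $T$.
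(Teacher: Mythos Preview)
Your proof is correct and follows essentially the same approach as the paper: expand the square, separate the diagonal $k=l$ (giving $\zeta_2^{[2]}(s_0,2\sigma)T$), and bound the off-diagonal by splitting into near-diagonal ($l\le 2k$) and far-diagonal ($l>2k$) pieces. The only cosmetic difference is that the paper carries out the computation in the original variables $(m_1,n_1,m_2,n_2)$, whereas you work directly with the collapsed coefficients $A_k=\sum_{m<k}m^{-s_0}$ and exploit the bound $|A_k|\le\zeta(\sigma_0)$ up front; this streamlines the bookkeeping but does not change the argument.
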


\begin{theorem} \label{T-1-2} For $s_0={\sigma_0}+i{t_0}\in \mathbb{C}$ with ${\sigma_0}>1$ and $s=\sigma+it\in \mathbb{C}$ with $\frac{1}{2}<\sigma\leq 1$, $t\geq 2$ and
$\sigma_0+\sigma>2$, we have
\begin{equation}
\begin{split}
& \int_{2}^{T}|\zeta_2(s_0,s)|^2 dt =\zeta_2^{[2]}(s_0,2\sigma)T+O\left( T^{2-2\sigma}\log T\right)+O\left( T^{1/2}\right).
\end{split}
\label{1-4}
\end{equation}
\end{theorem}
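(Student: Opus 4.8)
\emph{Proof sketch.} The plan rests on the fact that, since $\sigma_0>1$, the inner sum $A_k:=\sum_{m=1}^{k-1}m^{-s_0}$ converges to $\zeta(s_0)$, and on setting $r_k:=A_k-\zeta(s_0)=-\sum_{m\ge k}m^{-s_0}$ we have $|r_k|\ll k^{1-\sigma_0}$. For $\sigma>1$ this gives the identity
\[
\zeta_2(s_0,s)=\zeta(s_0)\bigl(\zeta(s)-1\bigr)+R(s_0,s),\qquad R(s_0,s):=\sum_{k\ge2}r_k k^{-s},
\]
and since $\sum_{k\ge2}|r_k|k^{-\sigma}\ll\sum_{k\ge2}k^{1-\sigma_0-\sigma}$ converges exactly when $\sigma_0+\sigma>2$, the Dirichlet series $R(s_0,s)$ is absolutely convergent throughout the region of Theorem~\ref{T-1-2}; hence the identity persists there by analytic continuation (all terms are holomorphic, since $t\ge2$). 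I would then expand
\[
|\zeta_2(s_0,s)|^2=|\zeta(s_0)|^2|\zeta(s)-1|^2+2\Re\bigl(\zeta(s_0)(\zeta(s)-1)\overline{R(s_0,s)}\bigr)+|R(s_0,s)|^2
\]
and integrate the three pieces over $t\in[2,T]$; their main terms should reassemble $\zeta_2^{[2]}(s_0,2\sigma)=\sum_{k\ge2}|A_k|^2k^{-2\sigma}$ via $|A_k|^2=|\zeta(s_0)|^2+2\Re(\zeta(s_0)\overline{r_k})+|r_k|^2$.

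For the first piece, write $\int_2^T|\zeta(s)-1|^2dt=\int_2^T|\zeta(s)|^2dt-2\Re\int_2^T\zeta(s)dt+(T-2)$. The classical mean square formula gives $\int_2^T|\zeta(\sigma+it)|^2dt=\zeta(2\sigma)T+O(T^{2-2\sigma}\log T)$ for $\tfrac12<\sigma\le1$, while shifting the contour of $\int_2^T\zeta(\sigma+it)dt$ to $\Re w=2$ (no pole is crossed, as $t\ge2$) and using a convexity bound on the horizontal segment at height $T$ gives $\int_2^T\zeta(\sigma+it)dt=T+O(T^{1/2})$. So the first piece equals $|\zeta(s_0)|^2(\zeta(2\sigma)-1)T+O(T^{2-2\sigma}\log T)+O(T^{1/2})$. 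For the third piece, absolute convergence lets me expand $|R(s_0,s)|^2=\sum_{k,l\ge2}r_k\overline{r_l}(kl)^{-\sigma}(l/k)^{it}$ and integrate termwise: the diagonal gives $(T-2)\sum_{k\ge2}|r_k|^2k^{-2\sigma}$, and the off-diagonal is $\ll\sum_{k\ne l}(kl)^{1-\sigma_0-\sigma}|\log(l/k)|^{-1}$, which converges when $\sigma_0+\sigma>2$ (via $|\log(l/k)|^{-1}\ll k/|l-k|$ for $\tfrac12k<l<2k$ and $\ll1$ otherwise). So the third piece is $T\sum_{k\ge2}|r_k|^2k^{-2\sigma}+O(1)$.

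The cross term is the heart of the proof. Writing $\overline{R(s_0,s)}=\sum_{l\ge2}\overline{r_l}\,l^{-\sigma}l^{it}$ and interchanging sum and integral,
\[
\int_2^T\zeta(s_0)(\zeta(s)-1)\overline{R(s_0,s)}\,dt=\zeta(s_0)\sum_{l\ge2}\overline{r_l}\,l^{-\sigma}\Bigl(\int_2^T\zeta(\sigma+it)l^{it}\,dt-\int_2^T l^{it}\,dt\Bigr),
\]
where $\int_2^T l^{it}dt=O(1/\log l)$ contributes only $O(1)$ after summation. To evaluate $\int_2^T\zeta(\sigma+it)l^{it}dt$ I would dissect in the size of $l$. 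For $2\le l<2T$, insert the Euler--Maclaurin truncation $\zeta(\sigma+it)=\sum_{n\le N}n^{-\sigma-it}+\frac{N^{1-\sigma-it}}{\sigma-1+it}+O(N^{-\sigma})$ with $N=\lceil10T\rceil$ (so $N\ge t$ and $N$ is bounded away from $l$): the diagonal $n=l$ yields $l^{-\sigma}(T-2)$, and the remaining terms together with the correction term yield $O(l^{1-\sigma}\log2l)+O(T^{1-\sigma}\log T)$ after integration by parts in the oscillatory factors $(l/n)^{it}$, $(l/N)^{it}$. For $l\ge2T$, use instead $\zeta(\sigma+it)=\sum_{n\le t}n^{-\sigma-it}+O(t^{-\sigma})$, in which the term $n=l$ never occurs and $\log(l/n)\ge\log2$ always, so $\int_2^T\zeta(\sigma+it)l^{it}dt\ll T^{1-\sigma}\log T$. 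Substituting back, the principal contribution is $\zeta(s_0)(T-2)\sum_{2\le l<2T}\overline{r_l}\,l^{-2\sigma}=\zeta(s_0)T\sum_{l\ge2}\overline{r_l}\,l^{-2\sigma}+O(T^{1/2})$, and every remaining contribution is $O(T^{1/2})$, using $1-\sigma<\tfrac12$ and $3-\sigma_0-2\sigma<\tfrac12$ (both from $\sigma>\tfrac12$, $\sigma_0+\sigma>2$), together with $\sum_{l\ge2}|r_l|l^{-\sigma}<\infty$ and $\sum_{l<2T}l^{2-\sigma_0-2\sigma}\log2l\ll T^{1/2}$. Hence the cross term equals $2\Re\bigl(\zeta(s_0)\sum_{l\ge2}\overline{r_l}\,l^{-2\sigma}\bigr)T+O(T^{1/2})$.

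Adding the three pieces, the main terms combine to $\zeta_2^{[2]}(s_0,2\sigma)T$ and the errors to $O(T^{2-2\sigma}\log T)+O(T^{1/2})$, which is \eqref{1-4}. The main obstacle is the cross term, and within it the precise evaluation of $\int_2^T\zeta(\sigma+it)l^{it}dt$: one needs it to equal $l^{-\sigma}T$ with an error small enough to survive the weighting by $\overline{r_l}l^{-\sigma}$ and summation over all $l$, and at the same time to be genuinely small for $l\ge2T$. The delicate point is tuning the truncation length of the Dirichlet series to the size of $l$ (length $\asymp T$ when $l<2T$, length $\asymp t$ when $l\ge2T$) so that the diagonal term is captured correctly while none of the error terms escapes the stated bounds; the classical mean square of $\zeta$ in the critical strip is used as a black box.
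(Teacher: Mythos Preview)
Your argument is correct, but it follows a genuinely different route from the paper's. The paper first establishes a double analogue of the Hardy--Littlewood approximate formula,
\[
\zeta_2(s_0,s)=\Sigma_1(s_0,s)+O(t^{-\sigma}),\qquad
\Sigma_1(s_0,s):=\sum_{m\ge1}\sum_{n\le t}\frac{1}{m^{s_0}(m+n)^s}
\quad(\sigma_0>1,\ \sigma_0+\sigma>2),
\]
and then computes $\int_2^T|\Sigma_1(s_0,s)|^2\,dt$ from scratch by expanding the square, separating the diagonal $m_1+n_1=m_2+n_2$ (which produces $\zeta_2^{[2]}(s_0,2\sigma)T$) from the off-diagonal, and bounding the latter by the usual $1/\log$ device; the $O(T^{1/2})$ then arises from Cauchy--Schwarz applied to the cross term between $\Sigma_1$ and the $O(t^{-\sigma})$ remainder. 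You instead exploit the fact that for $\sigma_0+\sigma>2$ the coefficients $A_k=\sum_{m<k}m^{-s_0}$ differ from the constant $\zeta(s_0)$ by $r_k\ll k^{1-\sigma_0}$, so that
\[
\zeta_2(s_0,s)=\zeta(s_0)\bigl(\zeta(s)-1\bigr)+R(s_0,s)
\]
with $R$ an \emph{absolutely convergent} Dirichlet series in $s$. This reduces the problem to the classical mean square of $\zeta(s)$ (used as a black box), an elementary mean square of $R$, and a cross term which you control via the approximate formula for $\zeta(s)$ alone. Your approach is tidier for this particular theorem and makes transparent why the Riemann-zeta error $O(T^{2-2\sigma}\log T)$ appears verbatim; however, it rests essentially on the absolute convergence of $R$, hence on $\sigma_0+\sigma>2$, and does not extend to the range $\tfrac32<\sigma_0+\sigma\le2$ of Theorem~\ref{T-1-3}. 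The paper's method, by keeping the double structure intact, is designed so that the same framework (with a refined approximation obtained via a Mellin--Barnes integral) carries over to that harder case.
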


The most important result in the present paper is the following Theorem \ref{T-1-3},
which describes the situation under the condition $\frac{3}{2}<\sigma_0+\sigma\leq 2$.

\begin{theorem} \label{T-1-3} Let 
$s_0={\sigma_0}+i{t_0} \in \mathbb{C}$ with 
$\frac{1}{2}<{\sigma_0}< \frac{3}{2}$ and $s=\sigma+it\in \mathbb{C}$ with 
$\frac{1}{2}<\sigma\leq 1$, $t\geq 2$ and $\frac{3}{2}<{\sigma_0}+\sigma\leq 2$.
Assume that when $t$ moves from $2$ to $T$, the point $(s_0,s)$ does not encounter
the hyperplane $s_0+s=2$ {\rm (}which is a singular locus of $\zeta_2${\rm )}.   Then 
\begin{equation}
\begin{split}
& \int_{2}^{T}|\zeta_2(s_0,s)|^2 dt =\zeta_2^{[2]}(s_0,2\sigma)T\\
& \quad +
\begin{cases}  
O\left(T^{4-2\sigma_0-2\sigma}\log T\right)+O\left(T^{1/2}\right) 
    & (\frac{1}{2}<\sigma_0<1,\frac{1}{2}<\sigma<1)\\  
O\left(T^{2-2\sigma_0}(\log T)^2\right)+O\left(T^{1/2}\right)   
    & (\frac{1}{2}<\sigma_0<1,\sigma=1)\\                      
O\left(T^{2-2\sigma}(\log T)^3\right)+O\left(T^{1/2}\right)  
    & (\sigma_0=1,\frac{1}{2}<\sigma<1)\\                           
O\left(T^{1/2}\right)               & (\sigma_0=1,\sigma=1)\\                         
O\left(T^{2-2\sigma}\log T\right)+O\left(T^{1/2}\right)           
    & (1<\sigma_0<\frac{3}{2},\frac{1}{2}<\sigma<1). 
\end{cases} 
\end{split}
\label{1-5}
\end{equation}
\end{theorem}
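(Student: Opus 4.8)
The plan is to construct an explicit analytic continuation of $\zeta_2(s_0,s)$ whose ``large'' part is built from $\zeta$ (and, when $s_0=1$, also $\zeta'$) evaluated on the two vertical lines $\Re=\sigma$ and $\Re=\sigma_0+\sigma-1$, and then to evaluate $\int_2^T|\zeta_2(s_0,s)|^2\,dt$ by combining the classical mean square theorem for $\zeta$ on those lines with the mean value theorem for Dirichlet series. Applying Euler--Maclaurin summation to $a(k,s_0):=\sum_{m=1}^{k-1}m^{-s_0}$ one obtains, for $s_0\ne1$, $a(k,s_0)=\zeta(s_0)-(s_0-1)^{-1}k^{1-s_0}+\phi_k(s_0)$ with $\phi_k(s_0)\ll_{s_0}k^{-\sigma_0}$, hence, using $\sum_{k\ge2}k^{-s}=\zeta(s)-1$ and $\sum_{k\ge2}k^{1-s_0-s}=\zeta(s_0+s-1)-1$ (analytic continuation),
\begin{equation}\label{decomp}
\zeta_2(s_0,s)=\zeta(s_0)\bigl(\zeta(s)-1\bigr)-\frac{1}{s_0-1}\bigl(\zeta(s_0+s-1)-1\bigr)+\Phi(s_0,s),
\end{equation}
where $\Phi(s_0,s):=\sum_{k\ge2}\phi_k(s_0)k^{-s}$ converges absolutely for $\Re s>1-\sigma_0$, hence on $\Re s=\sigma$ since $\sigma_0+\sigma>1$. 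At $s_0=1$ the first two terms of \eqref{decomp} coalesce and, passing to the limit, one gets $\zeta_2(1,s)=\gamma(\zeta(s)-1)-\zeta'(s)+\Phi(1,s)$ with $\Phi(1,\cdot)$ absolutely convergent for $\Re s>0$. Expanding $|a(k,s_0)|^2$ through these identities shows that $\zeta_2^{[2]}(s_0,2\sigma)$ is exactly the sum of the ``diagonal'' contributions that will appear below; the hypothesis $\sigma_0+\sigma>\tfrac32$ (with $\sigma>\tfrac12$) is precisely what makes that series converge, while the assumption that $(s_0,s)$ never meets $s_0+s=2$ keeps $\zeta(s_0+s-1)$ away from its pole for $2\le t\le T$.

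I would then square \eqref{decomp} and integrate term by term. The two ``square'' main pieces, $|\zeta(s_0)|^2\int_2^T|\zeta(s)-1|^2\,dt$ and $|s_0-1|^{-2}\int_2^T|\zeta(s_0+s-1)-1|^2\,dt$, are evaluated by the classical mean square theorem for $\zeta$ on the lines $\Re=\sigma\in(\tfrac12,1]$ and (after $u=t_0+t$) $\Re=\sigma_0+\sigma-1\in(\tfrac12,1]$, together with $\int_2^T\zeta(\beta+it)\,dt=(T-2)+O(T^{1/2})$ (contour shift to $\Re=2$), producing main terms $|\zeta(s_0)|^2(\zeta(2\sigma)-1)T$ and $|s_0-1|^{-2}(\zeta(2\sigma_0+2\sigma-2)-1)T$ with errors $O(T^{2-2\sigma}\log T)$, $O(T^{4-2\sigma_0-2\sigma}\log T)$ when the relevant abscissa is $<1$ and $O((\log T)^2)$ when it equals $1$. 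For $s_0=1$ the piece $\int_2^T|\zeta'(s)|^2\,dt$ appears in place of the second one, with error $O(T^{2-2\sigma}(\log T)^3)$ for $\sigma<1$ and $O((\log T)^4)$ for $\sigma=1$; this is what produces the extra logarithmic powers in the two cases with $\sigma_0=1$. Finally $\int_2^T|\Phi(s_0,s)|^2\,dt=\bigl(\sum_{k\ge2}|\phi_k(s_0)|^2k^{-2\sigma}\bigr)T+O(1)$ by the Montgomery--Vaughan mean value theorem, since $\sigma_0+\sigma>1$ makes both $\sum_k|\phi_k(s_0)|^2k^{-2\sigma}$ and $\sum_k|\phi_k(s_0)|^2k^{1-2\sigma}$ converge.

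The three cross terms are the crux. For each I would insert the Euler--Maclaurin expansion (equivalently, the approximate functional equation) of the $\zeta$-factor as a Dirichlet polynomial of length $K\asymp T$ plus a $\chi$-factor part plus a negligible tail, interchange summations, and isolate the diagonal ($n=k$) terms: they have size $\asymp T$ and, together with the main terms above, recombine up to admissible errors into $\zeta_2^{[2]}(s_0,2\sigma)\,T$. The off-diagonal parts are handled by the standard estimate for $\sum_{n\le K,\,n\ne k}n^{-\sigma}|\log(n/k)|^{-1}$: the near-diagonal range gives $\ll\log T\sum_k|\phi_k(s_0)|k^{1-2\sigma}\ll\log T$ (the last sum converging because $\sigma_0+2\sigma>2$, a consequence of $\sigma_0+\sigma>\tfrac32$ and $\sigma>\tfrac12$), the far range gives $\ll K^{1-\sigma}\ll T^{1/2}$, and the tail of the truncated diagonal gives $\ll T^{2-\sigma_0-2\sigma}\ll T^{1/2}$ by the same inequality; the $\chi$-factor parts, of modulus $\asymp t^{1/2-\sigma}$ or $\asymp t^{1/2-(\sigma_0+\sigma-1)}$, are bounded by Cauchy--Schwarz and contribute $\ll T^{1/2}$. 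The only genuinely two-variable cross term, $\int_2^T\zeta(s)\overline{\zeta(s_0+s-1)}\,dt$, is a shifted second moment of $\zeta$ handled in the same way, except that the boundary cases $\sigma=1$ and $\sigma_0=1$ bring in one further logarithmic factor (from $\sum_{n\le K}n^{-1}\asymp\log K$, resp.\ from differentiation), which is what accounts for the $(\log T)^2$ in the case $(\tfrac12<\sigma_0<1,\ \sigma=1)$. Summing all contributions, the coefficient of $T$ is $\zeta_2^{[2]}(s_0,2\sigma)$ and the error is the largest of those listed, which gives the five cases of \eqref{1-5}.

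\textbf{Main obstacle.} The delicate part is this cross-term analysis: one has to pull out the $\asymp T$ diagonal parts sharply enough that they recombine into $\zeta_2^{[2]}(s_0,2\sigma)T$, while bounding every off-diagonal and $\chi$-factor remainder by $O(T^{1/2})$ with constants depending only on $s_0$. What makes this nontrivial is that under the hypotheses the line $\Re s=\sigma_0+\sigma-1$ may lie arbitrarily close to $\tfrac12$, so the mean square of $\zeta$ there is already nearly of order $T$, and one must carry along both its secondary (functional-equation) main term and the logarithmic losses that occur when $\sigma$ or $\sigma_0$ equals $1$.
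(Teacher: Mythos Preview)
Your approach is genuinely different from the paper's and appears to be viable. The paper does \emph{not} decompose $\zeta_2(s_0,s)$ into pieces built from the Riemann zeta-function; instead it proves a ``second approximation theorem'' (Theorem~\ref{T-5-3}) asserting
\[
\zeta_2(s_0,s)=\sum_{m\ge1}\sum_{n\le t}\frac{1}{m^{s_0}(m+n)^{s}}+E(t),
\]
with $E(t)$ small in the range $\tfrac12<\sigma_0<\tfrac32$, $\sigma>\tfrac12$, $\sigma_0+\sigma>1$. Establishing this approximation below the line $\sigma_0+\sigma=2$ is the heart of the paper: it requires the Euler--Maclaurin formula together with a Mellin--Barnes integral (Lemma~\ref{L-5-2}) to continue the term $(1-s)^{-1}\sum_m m^{-s_0}(m+x)^{1-s}$ across that line. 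Once the approximation is in hand, the paper squares the truncated double sum directly, splits into diagonal ($m_1+n_1=m_2+n_2$) and off-diagonal parts, and estimates the off-diagonal pieces by elementary dyadic splittings in the $m_j$-variables; a final Cauchy--Schwarz step produces the $O(T^{1/2})$.

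Your route avoids the Mellin--Barnes machinery altogether by writing $a(k,s_0)=\zeta(s_0)-\frac{k^{1-s_0}}{s_0-1}+\phi_k(s_0)$ and hence reducing everything to mean values of $\zeta$ (or $\zeta'$ when $s_0=1$) on the two lines $\Re=\sigma$ and $\Re=\sigma_0+\sigma-1$, plus Montgomery--Vaughan for the absolutely convergent $\Phi$. This is more conceptual and reuses classical one-variable results; it also explains transparently why $\sigma_0+\sigma>\tfrac32$ is the natural threshold (it is precisely the condition for the second line to lie to the right of $\tfrac12$, and equivalently for $\sum_k k^{2-2\sigma_0-2\sigma}$ to converge). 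The price is the three cross integrals, which you correctly identify as the crux. Your sketch for them is on the right track: after replacing each $\zeta$-factor by its length-$t$ Dirichlet polynomial (the $\chi$-part is in fact not needed, since both abscissae exceed $\tfrac12$), the diagonal extractions recombine exactly into $\zeta_2^{[2]}(s_0,2\sigma)$, and the convergence conditions you invoke ($\sigma_0+2\sigma>2$ and $\sigma_0+\sigma>\tfrac32$) are exactly what is needed to bound the near-diagonal sums. The shifted moment $\int_2^T\zeta(s)\overline{\zeta(s_0+s-1)}\,dt$ has off-diagonal size $O(T^{3-\sigma_0-2\sigma}\log T)$, which in each of the five regimes is absorbed by the stated error; in fact in a couple of cases your method seems to save a logarithm relative to the paper's bounds. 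The one point to be careful with is that the ``diagonal'' in each cross term carries a phase $k^{\pm it_0}$, so you must keep track of complex conjugates when recombining the coefficient of $T$; you have done so implicitly but it deserves a line of justification.
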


\begin{remark}
In Theorems \ref{T-1-2} and \ref{T-1-3}, the error terms $O(T^{1/2})$ are coming
from the simple application of the Cauchy-Schwarz inequality.
It is plausible to expect that we can reduce these error terms by more 
elaborate analysis.
\end{remark}

It is interesting to compare our theorems with the classical results on the mean
square of the Riemann zeta-function $\zeta(s)$.   It is known that
\begin{align}\label{1-6}
\int_2^T|\zeta(\sigma+it)|^2 dt\sim\zeta(2\sigma)T\qquad\left(\sigma>\frac{1}{2}
     \right)
\end{align}
and
\begin{align}\label{1-7}
\int_2^T|\zeta(\frac{1}{2}+it)|^2 dt\sim T\log T
\end{align}
(see Titchmarsh \cite[Theorems 7.2, 7.3]{Titch}).   These simple results suggest
two important observations.   

(a) First, it is trivial that $\zeta(\sigma+it)$ is
bounded with respect to $t$ in the region of absolute convergence $\sigma>1$,
but \eqref{1-6} and \eqref{1-7} suggest that $\zeta(\sigma+it)$ seems not so large
in the strip $1/2\leq\sigma\leq 1$, too.   In fact, the well-known Lindel{\" o}f
hypothesis predicts that
\begin{align}\label{1-8}
\zeta(\sigma+it)=O\left(t^{\varepsilon}\right)\qquad\left( \frac{1}{2}\leq\sigma<1\right)
\end{align}
for any $\varepsilon>0$.   (For $\sigma=1$, even a stronger estimate has already
been known.)   Formulas \eqref{1-6} and \eqref{1-7} support this hypothesis.

(b) The second observation is that the coefficient $\zeta(2\sigma)$ on the right-hand
side of \eqref{1-6} tends to infinity as $\sigma\to 1/2$, hence the form of the
formula should be changed at $\sigma=1/2$, which is in fact embodied by \eqref{1-7}.
This is one of the special features of the ``critical line'' $\Re s=1/2$ in the theory 
of the Riemann zeta-function.

Our theorems proved in the present paper may be regarded as double analogues of
\eqref{1-6}.   Since the coefficient $\zeta_2^{[2]}(s_0,2\sigma)$ tends to
infinity as $\sigma_0+\sigma\to 3/2$, it is natural to raise,
analogously to the above (a) and (b), the following two conjectures:

(i) (a double analogue of the Lindel{\" o}f hypothesis) For any $\varepsilon>0$, 
\begin{align}\label{1-9}
\zeta_2(s_0,s)=O\left(t^{\varepsilon}\right)
\end{align}
when $(s_0,s)$ (which is not in the domain of absolute convergence) satisfies
$\sigma_0>1/2$, $\sigma>1/2$, $t\geq 2$, $\sigma_0+\sigma\geq 3/2$ and
$s_0+s\neq 2$;

(ii) (the criticality of $\sigma_0+\sigma=3/2$)
When $\sigma_0+\sigma=3/2$, the form of the main term of the mean square formula 
would not be $CT$ (with a constant $C$; most probably, some log-factor would appear).

\begin{remark}
It is not easy to find the ``correct'' double analogue of the Lindel{\" o}f 
hypothesis.   Nakamura and Pa{\'n}kowski \cite{Nak-Pan} raised the conjecture
\begin{align}\label{1-10}
\zeta_2\left(1/2+it,1/2+it\right)=O\left(t^{\varepsilon}\right)
\end{align}
(actually they stated their conjecture for more general multiple case), and gave
a certain result (their Proposition 6.3) which supports the conjecture.
However, the value $\zeta_2\left(1/2+it_1,1/2+it_2\right)$ is, if $t_1\neq t_2$,
not always small.   In fact, Corollary 1 of Kiuchi, Tanigawa and Zhai \cite{KTZ}
describes the situation when $\zeta_2(s_1,s_2)$ is not small.   For example,
if $t_2\ll t_1^{1/6-\varepsilon}$, then
$$
\zeta_2\left(1/2+it_1,1/2+it_2\right)=\Omega\left(t_1^{1/3+\varepsilon}\right).
$$
Our theorems imply that our conjecture \eqref{1-9} is true in mean.   That is,
\eqref{1-9} is reasonable in view of our theorems.
\end{remark}

\begin{remark}
The above conjecture (ii) suggests that $\sigma_0+\sigma=3/2$ might be the
double analogue of the critical line of the Riemann zeta-function $\Re s=1/2$.
On the other hand, in view of the result of Nakamura and Pa{\'n}kowski mentioned
above, we see that another candidate of the double analogue of the critical line
is $\sigma_0+\sigma=1$.   At present it is not clear which is more plausible.
\end{remark}

\begin{remark}
We cannot expect the analogue of the Riemann hypothesis on the location
of zeros.   In fact, Theorem 5.1 of Nakamura and Pa{\'n}kowski \cite{Nak-Pan} 
asserts (in the double zeta case) that for any $1/2<\sigma_1<\sigma_2<1$,
$\zeta_2(s,s)$ has $\asymp T$ non-trivial zeros in the rectangle
$\sigma_1<\sigma<\sigma_2$, $0<t<T$.
\end{remark}

The plan of the present paper is as follows.   We first prove the simplest
Theorem \ref{T-1-1} in Section \ref{sec-2}.   To prove the other theorems, we need
certain approximation formulas for $\zeta_2(s_0,s)$.   Using the Euler-Maclaurin
formula, we show the first approximation formula (Theorem \ref{T-3-1}) 
in Section \ref{sec-3}, and using
it, we prove Theorem \ref{T-1-2} in Section \ref{sec-4}.   In Section \ref{sec-4.25}
we introduce and discuss the double analogue of the Euler constant.   The most
difficult part of the present paper is the proof of Theorem \ref{T-1-3}.
In Section \ref{sec-4.5} we show the second approximation formula (Theorem
\ref{T-5-3}), by employing
the method of Mellin-Barnes integral formula.   Based on this second approximation 
formula, we give the proof of Theorem \ref{T-1-3} in the final Section \ref{sec-5}.

A possible direction of future study is to search for a strong type of
approximate functional equation (that is, similar to 
\cite[Theorem 4.16]{Titch})
for the double zeta-function, based on our previous results
on functional equations for the double zeta-function obtained in
\cite{KMT-Debrecen,Mat-Camb}. If we could
succeed in finding such an equation, we would be able to give
a more precise version of
mean value theorems for the double zeta-function.

A part of the results in this paper has been announced in \cite{MT-RIMS}.

\section{Proof of Theorem \ref{T-1-1}} \label{sec-2}

In this section, we give the proof of Theorem \ref{T-1-1}. Throughout this paper, we frequently use the following elementary estimations:
\begin{align*}
& \sum_{m=1}^{k-1} \frac{1}{m}  \ll \int_{1}^{k} u^{-1}du = \log k,\\
& \sum_{m=1}^{k-1} \frac{1}{m^\sigma}  \ll \int_{0}^{k} u^{-\sigma}du = \frac{k^{1-\sigma}}{1-\sigma}\qquad (0<\sigma<1),\\
& \sum_{m=k}^{\infty} \frac{1}{m^{\sigma}}  \ll \int_{k}^{\infty} u^{-\sigma}du = \frac{k^{1-\sigma}}{\sigma-1}\qquad (\sigma>1).
\end{align*}

\begin{proof}[Proof of Theorem \ref{T-1-1}]
Let $s_0={\sigma_0}+i{t_0}\in \mathbb{C}$ with ${\sigma_0}>1$ and $s=\sigma+it\in \mathbb{C}$ with $\sigma>1$. We set
\begin{align*}
S:=\zeta_2(s_0,s) \overline{\zeta_2(s_0,{s})} & =\sum_{m_1\geq 1 \atop n_1\geq 1}\frac{1}{m_1^{s_0}(m_1+n_1)^{\sigma+it}}\sum_{m_2\geq 1 \atop n_2\geq 1}\frac{1}{m_2^{\overline{s_0}}(m_2+n_2)^{\sigma-it}}.
\end{align*}
Taking out the terms corresponding to $m_1+n_1=m_2+n_2$ and setting $k=m_1+n_1$, we have
\begin{align*}
S& =\sum_{k=2}^\infty \left( \sum_{m_1=1}^{k-1} \sum_{m_2=1}^{k-1}\frac{1}{m_1^{s_0} m_2^{\overline{s_0}}}\right) \frac{1}{k^{2\sigma}}\\
& \ +\sum_{m_1,m_2,n_1,n_2\geq 1 \atop m_1+n_1\not= m_2+n_2}\frac{1}{m_1^{s_0}m_2^{\overline{s_0}}(m_1+n_1)^{\sigma+it}(m_2+n_2)^{\sigma-it}}\\
& =\zeta_2^{[2]}({s_0},2\sigma) +\sum_{m_1,m_2,n_1,n_2\geq 1 \atop m_1+n_1\not= m_2+n_2}\frac{1}{m_1^{s_0}m_2^{\overline{s_0}}(m_1+n_1)^{\sigma}(m_2+n_2)^{\sigma}}\left(\frac{m_2+n_2}{m_1+n_1}\right)^{it}.
\end{align*}
Hence we have
\begin{align*}
&\int_{2}^{T}|\zeta_2({s_0},s)|^2 dt =\zeta_2^{[2]}({s_0},2\sigma)(T-2) \\
&\qquad + \sum_{m_1,m_2,n_1,n_2\geq 1 \atop m_1+n_1\not= m_2+n_2}\frac{1}{m_1^{s_0}m_2^{\overline{s_0}}(m_1+n_1)^{\sigma}(m_2+n_2)^{\sigma}}\int_{2}^{T}\left(\frac{m_2+n_2}{m_1+n_1}\right)^{it}dt.
\end{align*}
The second term on the right-hand side is 
\begin{align*}
& \sum_{m_1,m_2,n_1,n_2\geq 1 \atop m_1+n_1\not= m_2+n_2}\frac{1}{m_1^{s_0}m_2^{\overline{s_0}}(m_1+n_1)^{\sigma}(m_2+n_2)^{\sigma}}\\
& \qquad \times \frac{e^{iT\log((m_2+n_2)/(m_1+n_1))}-e^{2i\log((m_2+n_2)/(m_1+n_1))}}{i\log((m_2+n_2)/(m_1+n_1))}\\
& \ll \sum_{m_1,m_2,n_1,n_2\geq 1 \atop m_1+n_1< m_2+n_2}\frac{1}{(m_1m_2)^{{\sigma_0}}(m_1+n_1)^{\sigma}(m_2+n_2)^{\sigma}}\frac{1}{\log\frac{m_2+n_2}{m_1+n_1}}\\
& =\left(\sum_{m_1,m_2,n_1,n_2\geq 1 \atop m_1+n_1< m_2+n_2\leq 2(m_1+n_1)}+\sum_{m_1,m_2,n_1,n_2\geq 1 \atop m_2+n_2> 2(m_1+n_1)}\right)\frac{1}{(m_1m_2)^{{\sigma_0}}}\\
& \qquad \qquad \times \frac{1}{(m_1+n_1)^{\sigma}(m_2+n_2)^{\sigma}\log\frac{m_2+n_2}{m_1+n_1}}.
\end{align*}
We denote the right-hand side by $V_1+V_2$. Then we have
\begin{align*}
V_2 & \ll \sum_{m_1,m_2,n_1,n_2\geq 1 \atop m_2+n_2> 2(m_1+n_1)}\frac{1}{(m_1m_2)^{{\sigma_0}}(m_1+n_1)^{\sigma}(m_2+n_2)^{\sigma}}\\
& \ll \sum_{m_1,m_2,n_1,n_2\geq 1}\frac{1}{(m_1m_2)^{{\sigma_0}}(n_1n_2)^{\sigma}}=O(1).
\end{align*}
As for $V_1$, setting $r=(m_2+n_2)-(m_1+n_1)$, we have
\begin{align*}
V_1&=\sum_{m_1,m_2,n_1\geq 1}\frac{1}{(m_1m_2)^{{\sigma_0}}}\sum_{r=1}^{m_1+n_1}\frac{1}{(m_1+n_1)^{\sigma}(m_1+n_1+r)^{\sigma}}\frac{1}{\log\frac{m_1+n_1+r}{m_1+n_1}}.
\end{align*}
Since $m_1+n_1+r \asymp m_1+n_1$, we obtain
\begin{align*}
V_1&\ll \sum_{m_1,m_2,n_1\geq 1}\frac{1}{(m_1m_2)^{{\sigma_0}}}\frac{1}{(m_1+n_1)^{2\sigma}}\sum_{r=1}^{m_1+n_1}\frac{1}{\log\left(1+\frac{r}{m_1+n_1}\right)}\\
&\ll \sum_{m_1,m_2,n_1\geq 1}\frac{1}{(m_1m_2)^{{\sigma_0}}}\frac{1}{(m_1+n_1)^{2\sigma}}\sum_{r=1}^{m_1+n_1}\frac{m_1+n_1}{r}\\
&\ll \sum_{m_1,m_2,n_1\geq 1}\frac{1}{(m_1m_2)^{{\sigma_0}}}\frac{1}{(m_1+n_1)^{2\sigma-1}}\log(m_1+n_1)\\
&\ll \sum_{m_2\geq 1}\frac{1}{m_2^{\sigma_0}}\sum_{m_1,n_1\geq 1}\frac{\log(m_1+n_1)}{m_1^{{\sigma_0}}(m_1+n_1)^{2\sigma-1}}=O(1),
\end{align*}
because ${\sigma_0}>1$ and $\sigma>1$.
This completes the proof of Theorem \ref{T-1-1}.
\end{proof}

\begin{remark}
The fundamental idea of the above proof of Theorem \ref{T-1-1} is similar to that
of the proof of \cite[Theorem 7.2]{Titch}.   The basic structure of the proofs of
Theorems \ref{T-1-2} and \ref{T-1-3} given below is the same, though the technical
details are more complicated.
\end{remark}

\section{The first approximation theorem} \label{sec-3} 

Hardy and Littlewood proved the following well-known result (see \cite[Theorem 4.11]{Titch}). Let $\sigma_1>0$, $x\geq 1$ and $C>1$. Suppose $s=\sigma+it \in \mathbb{C}$ with $\sigma\geq \sigma_1$ and $|t|\leq 2\pi x/C$.   Then 
\begin{equation}
\label{3-1}
\zeta(s)=\sum_{1\leq n \leq x}\frac{1}{n^{s}}-\frac{x^{1-s}}{1-s}+O\left( x^{-\sigma}\right)\quad (x\to \infty).
\end{equation}

Here we prove the double series analogue of \eqref{3-1} as follows.

\begin{theorem} \label{T-3-1} 
Let ${s_0}={\sigma_0}+i{t_0} \in \mathbb{C}$,
$s=\sigma+it \in \mathbb{C}\setminus\{1\}$, $x\geq 1$ and $C>1$.  Suppose 
$\sigma> \max(0,2-{\sigma_0})$ and $|t|\leq 2\pi x/C$.   Then 
\begin{equation}
\label{3-2}
\begin{split}
\zeta_2({s_0},s) & =\sum_{m=1}^\infty \sum_{1\leq n \leq x}\frac{1}{m^{{s_0}}(m+n)^{s}}-\frac{1}{1-s}\sum_{m=1}^\infty \frac{1}{m^{s_0} (m+x)^{s-1}}\\
 & \qquad+
\begin{cases}  
   O(x^{-\sigma})   &  (\sigma_0>1)\\
   O(x^{-\sigma}\log x)   &  (\sigma_0=1)\qquad (x \to \infty).\\
   O(x^{1-\sigma-\sigma_0})   &  (\sigma_0<1)
\end{cases} 
\end{split}
\end{equation}
\end{theorem}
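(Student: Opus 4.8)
The plan is to deduce the double-series formula from the classical one-variable estimate \eqref{3-1}. The starting point is the Hurwitz-type expansion
\begin{equation*}
\zeta_2(s_0,s)=\sum_{m=1}^{\infty}\frac{\zeta(s,m+1)}{m^{s_0}},\qquad \zeta(s,a):=\sum_{n=0}^{\infty}\frac{1}{(n+a)^{s}},
\end{equation*}
which is just a rearrangement of \eqref{1-1} when $\sigma>1$ and $\sigma_0+\sigma>2$. Since $\zeta(s,a)=\frac{a^{1-s}}{s-1}+O_s(a^{-\sigma})$ as $a\to\infty$, the series on the right converges locally uniformly on $\{\sigma_0+\sigma>2,\ s\neq1\}$, a region on which $\zeta_2$ is holomorphic (its singular loci being $s=1$ and $s_0+s=2,1,0,-2,\dots$); hence the identity extends there by analytic continuation, and in particular it holds throughout the range of the theorem.

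Next, fix $s$ with $\sigma>0$ and apply \eqref{3-1} with $\sigma_1=\sigma$ at the point $x'=m+x$, for each integer $m\geq1$. This is permissible: $x'\geq x\geq1$, and $|t|\leq 2\pi x/C\leq 2\pi(m+x)/C=2\pi x'/C$, so the hypotheses hold simultaneously for every $m$, with an implied constant in $O((x')^{-\sigma})$ depending only on $\sigma_1=\sigma$ and $C$, hence independent of $m$. Feeding this into the identity $\zeta(s,m+1)=\zeta(s)-\sum_{j=1}^{m}j^{-s}$ and using
\begin{equation*}
\sum_{1\leq n\leq m+x}\frac{1}{n^{s}}-\sum_{1\leq j\leq m}\frac{1}{j^{s}}=\sum_{1\leq n\leq x}\frac{1}{(m+n)^{s}}
\end{equation*}
(valid since $\lfloor m+x\rfloor=m+\lfloor x\rfloor$), I obtain, uniformly in $m\geq1$,
\begin{equation*}
\zeta(s,m+1)=\sum_{1\leq n\leq x}\frac{1}{(m+n)^{s}}-\frac{1}{1-s}\cdot\frac{1}{(m+x)^{s-1}}+O\big((m+x)^{-\sigma}\big).
\end{equation*}

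Multiplying by $m^{-s_0}$ and summing over $m\geq1$, each of the three resulting series converges absolutely because $\sigma_0+\sigma>2$ forces $1-\sigma_0-\sigma<-1$; this produces the main identity of \eqref{3-2} with total error $\ll\sum_{m\geq1}m^{-\sigma_0}(m+x)^{-\sigma}$. I would then split this sum at $m=x$. For $m\leq x$ one has $(m+x)^{-\sigma}\asymp x^{-\sigma}$, so this part is $\ll x^{-\sigma}\sum_{m\leq x}m^{-\sigma_0}$, which by the elementary bounds recalled at the start of Section \ref{sec-2} is $O(x^{-\sigma})$, $O(x^{-\sigma}\log x)$, or $O(x^{1-\sigma-\sigma_0})$ according as $\sigma_0>1$, $\sigma_0=1$, or $\sigma_0<1$. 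For $m>x$ one has $\sum_{m>x}m^{-\sigma_0-\sigma}\ll x^{1-\sigma_0-\sigma}$; when $\sigma_0\geq1$ this is $\leq x^{-\sigma}$ and is absorbed, while for $\sigma_0<1$ it is of the order $x^{1-\sigma-\sigma_0}$ already present. Combining the two pieces gives exactly the three cases of \eqref{3-2}.

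The real work here is bookkeeping rather than ideas: the one genuine point to watch is that \eqref{3-1} is invoked at infinitely many arguments $x'=m+x\to\infty$ with a single fixed $s$, so one must be sure its implied constant does not depend on $x'$ — it does not, depending only on $\sigma_1$ and $C$. The other point is that all the manipulations take place outside the domain of absolute convergence of \eqref{1-1}, which is exactly where the hypothesis $\sigma>\max(0,2-\sigma_0)$ enters: $\sigma_0+\sigma>2$ makes the rearranged series converge and licenses the analytic continuation, while $\sigma>0$ is what allows \eqref{3-1} to be applied at all. If one prefers a self-contained treatment, the appeal to \eqref{3-1} can be replaced by applying the Euler–Maclaurin summation formula directly to the tail $\sum_{n>x}(m+n)^{-s}$; the mismatch between $x$ and $\lfloor x\rfloor$ then only perturbs the error term and is harmless.
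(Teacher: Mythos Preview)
Your argument is correct, and it takes a genuinely different route from the paper's. The paper works from scratch: it applies Euler--Maclaurin to the tail $\sum_{l>m+N}l^{-s}$ to obtain an identity \eqref{3-4} valid for $\sigma>0$, $\sigma_0+\sigma>1$, and then applies Titchmarsh's exponential-sum Lemma~\ref{L-3-2} directly to $\sum_{x<n\leq N}(m+n)^{-s}$, producing the uniform error $O((m+x)^{-\sigma})$ and a term that cancels against $A_2$. You instead recognize that all of this work is already packaged in \eqref{3-1}: writing $\zeta_2(s_0,s)=\sum_m m^{-s_0}\zeta(s,m+1)$ and invoking \eqref{3-1} at $x'=m+x$ (with the key observation that $|t|\leq 2\pi x/C\leq 2\pi x'/C$ holds for every $m$ with a single implied constant) yields the per-$m$ approximation in one line, after which the error bookkeeping is identical to the paper's. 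Your route is shorter and makes transparent that Theorem~\ref{T-3-1} is the $m$-summed version of \eqref{3-1}. The paper's route buys something else: the intermediate identity \eqref{suzumenooyado} is set up so as to be reused in Section~\ref{sec-4.5} for the second approximation theorem, where $\sigma_0+\sigma\leq 2$ and your series $\sum_m m^{-s_0}\zeta(s,m+1)$ no longer converges.

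One small caveat on your closing remark: saying that the appeal to \eqref{3-1} ``can be replaced by applying the Euler--Maclaurin summation formula directly'' understates the point. Naive Euler--Maclaurin on $\sum_{n>x}(m+n)^{-s}$ gives an error $O\bigl((|s|/\sigma)(m+x)^{-\sigma}\bigr)$ with an unwanted factor $|s|$; it is precisely the van der Corput-type input (Lemma~\ref{L-3-2}, or equivalently \eqref{3-1}) under the constraint $|t|\leq 2\pi x/C$ that removes this factor. Your main argument uses \eqref{3-1} and is therefore unaffected.
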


In order to prove this theorem, we quote the following lemma.

\begin{lemma}[\cite{Titch} Lemma 4.10] \label{L-3-2}
Let $f(x)$ be a real function with a continuous and steadily decreasing derivative $f'(x)$ in $(a,b)$, and let $f'(b)=\alpha$, $f'(a)=\beta$. Let $g(x)$ be a real positive decreasing function with a continuous derivative $g'(x)$, satisfying that $|g'(x)|$ is steadily decreasing. Then
\begin{equation}
\begin{split}
\sum_{a<n\leq b}g(n)e^{2\pi i f(n)}& =\sum_{\nu \in \mathbb{Z} \atop 
\alpha-\eta<\nu<\beta+\eta}\int_{a}^{b}g(x)e^{2\pi i(f(x)-\nu x)}dx \\
& +O\left( g(a)\log(\beta-\alpha+2)\right)+O\left( |g'(a)|\right)
\end{split}
\label{3-3}
\end{equation}
for an arbitrary $\eta\in (0,1)$. 
\end{lemma}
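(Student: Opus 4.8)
Since \eqref{3-3} is the classical van der Corput transformation of exponential sums, quoted here verbatim from \cite[Lemma 4.10]{Titch}, the plan is to reproduce the standard argument: convert the sum $\sum_{a<n\le b}g(n)e^{2\pi i f(n)}$ into a sum of oscillatory integrals by a truncated Poisson-type summation, retain those integrals whose phase has a stationary point in $(a,b)$ as the main term, and dispose of the remaining integrals by the first-derivative test. I would write $\phi(x)=g(x)e^{2\pi i f(x)}$ throughout.

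First I would apply the first-order Euler--Maclaurin formula with the sawtooth function $\psi(x)=\{x\}-\tfrac12$, giving $\sum_{a<n\le b}\phi(n)=\int_a^b\phi(x)\,dx+\int_a^b\psi(x)\phi'(x)\,dx+\psi(b)\phi(b)-\psi(a)\phi(a)$, where the endpoint terms are $O(g(a))$ because $g$ is positive and decreasing. Next I would insert the Fourier expansion $\psi(x)=-\sum_{\nu\ge1}(\pi\nu)^{-1}\sin(2\pi\nu x)$ into the correction integral and integrate by parts to move the derivative back onto $\phi$; after symmetrising over $\pm\nu$ and absorbing the $\nu=0$ term $\int_a^b\phi\,dx$, this reorganises the whole expression as $\sum_{\nu\in\mathbb{Z}}\int_a^b g(x)e^{2\pi i(f(x)-\nu x)}\,dx$ up to boundary contributions. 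The phase $f(x)-\nu x$ has derivative $f'(x)-\nu$; since $f'$ decreases steadily from $f'(a)=\beta$ to $f'(b)=\alpha$, this derivative vanishes somewhere in $(a,b)$ exactly when $\alpha<\nu<\beta$. I would therefore keep the integrals with $\alpha-\eta<\nu<\beta+\eta$ as the stated main sum, the buffer $\eta\in(0,1)$ ensuring that no genuinely stationary term is discarded.

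It then remains to estimate the tail $\nu\notin(\alpha-\eta,\beta+\eta)$. For such $\nu$ the phase derivative satisfies $|f'(x)-\nu|\ge \operatorname{dist}(\nu,[\alpha,\beta])$, so the first-derivative test for oscillatory integrals---valid precisely under the hypotheses that $g$ is positive, decreasing, and $|g'|$ steadily decreasing---bounds each integral by $\ll g(a)/\operatorname{dist}(\nu,[\alpha,\beta])$. The main obstacle, and the delicate point of the whole argument, is that summing this bound over all integers $\nu$ outside the range is only conditionally convergent: the boundary terms produced by the integration by parts must not be estimated absolutely but instead resummed over $\nu$, where they reassemble into cotangent-type (sawtooth) expressions that combine with the explicit endpoint terms from the first step. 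Carrying out this bookkeeping carefully produces the two error terms in \eqref{3-3}: summing $1/|\nu-\beta|$ and $1/|\alpha-\nu|$ over the $O(\beta-\alpha)$ relevant integers yields $O(g(a)\log(\beta-\alpha+2))$, while the interior (total-variation) part of the integration by parts, controlled by $\int_a^b|g'|\le g(a)-g(b)$, contributes the $O(|g'(a)|)$ term. Since every step is classical, I would ultimately follow the proof in \cite[\S4]{Titch} to fix all constants.
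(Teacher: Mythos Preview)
The paper does not actually prove this lemma: it is quoted verbatim from \cite[Lemma 4.10]{Titch} and used as a black box in the proof of Theorem \ref{T-3-1}. So there is no ``paper's own proof'' to compare your attempt against; the authors simply cite Titchmarsh and move on.

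Your sketch is a reasonable outline of the classical argument in \cite[\S4.7]{Titch}, and you correctly identify the structure (Euler--Maclaurin, Fourier expansion of the sawtooth, first-derivative test on the non-stationary tail) and the delicate point (the resummation of boundary terms to avoid a divergent absolute sum). One minor inaccuracy: the $O(|g'(a)|)$ term does not arise from the total variation $\int_a^b|g'|$ as you suggest---that quantity is $g(a)-g(b)\le g(a)$, which would be absorbed into the first error term. In Titchmarsh's proof this term comes instead from applying the second-derivative test (his Lemma 4.4) to the integral $\int_a^b g'(x)e^{2\pi i(f(x)-\nu x)}\,dx$ that appears after integrating by parts, using that $|g'|$ is monotone. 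But this is a detail; your overall plan is sound, and since the paper itself simply invokes the reference, you are already doing more than the authors did.
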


\begin{proof}[Proof of Theorem \ref{T-3-1}]
By the Euler-Maclaurin formula (see \cite[Equation (2.1.2)]{Titch}), we have
\begin{equation}
\label{EulerMaclaurin}
\begin{split}
\sum_{a<l\leq b}\frac{1}{l^s} & =\frac{b^{1-s}-a^{1-s}}{1-s}-s\int_{a}^{b}\frac{y-[y]-1/2}{y^{s+1}}dy +\frac{1}{2}\left(b^{-s}-a^{-s}\right)
\end{split}
\end{equation}
for $0<a<b$. 
At first assume $\sigma_0>1$, $\sigma>1$.
Setting $a=m+N$ (where $m \in \mathbb{N}$, $N \in \mathbb{N}_0$) in 
\eqref{EulerMaclaurin} and $b\to \infty$, we have
\begin{equation*}
\begin{split}
\sum_{l=m+N+1}^{\infty}\frac{1}{l^s} & =-\frac{(m+N)^{1-s}}{1-s}-s\int_{m+N}^{\infty}\frac{y-[y]-1/2}{y^{s+1}}dy -\frac{1}{2}(m+N)^{-s}.
\end{split}
\end{equation*}
Therefore we have
\begin{align}
& \sum_{m=1}^\infty \frac{1}{m^{s_0}}\sum_{n=1}^\infty \frac{1}{(m+n)^s}\notag\\
&\ =\sum_{m=1}^\infty \frac{1}{m^{s_0}}\sum_{n=1}^{N} \frac{1}{(m+n)^s}
  -\sum_{m=1}^\infty \frac{(m+N)^{1-s}}{m^{s_0}(1-s)}\notag\\
& \quad -s\sum_{m=1}^\infty \frac{1}{m^{s_0}}\int_{m+N}^{\infty}
     \frac{y-[y]-1/2}{y^{s+1}}dy 
 -\frac{1}{2}\sum_{m=1}^\infty \frac{1}{m^{s_0}(m+N)^{s}}\notag\\
&\ =A_1-A_2-A_3-A_4,\label{3-4}
\end{align}
say. The terms $A_1$ and $A_4$ are absolutely convergent in the region 
$\sigma_0+\sigma>1$, and in this region
\begin{align}\label{S_4}
A_4=O\left( \sum_{m=1}^\infty \frac{1}{m^{\sigma_0}(m+N)^{\sigma}}\right).
\end{align}
The integral in $A_3$ is absolutely convergent if $\sigma>0$, and is 
$O(\sigma^{-1}(m+N)^{-\sigma})$.   Therefore $A_3$ can be continued to
the region $\sigma>0$, $\sigma_0+\sigma>1$ and 
\begin{align}\label{S_3}
A_3=O\left( \sum_{m=1}^\infty \frac{|s|/\sigma}{m^{\sigma_0}(m+N)^{\sigma}}\right)
\end{align}
there.   The term $A_2$ is absolutely convergent for $\sigma_0+\sigma>2$, $s\neq 1$.
Therefore we see that the right-hand side of \eqref{3-4} 
gives the meromorphic continuation to the desired region.

Hereafter in this proof we assume $N>x$. 
The term $A_1$ can be rewritten as 
\begin{equation}
\begin{split}
& \sum_{m=1}^\infty \sum_{n\leq x} \frac{1}{m^{s_0}(m+n)^s}+\sum_{m=1}^\infty \sum_{x<n\leq N} \frac{e^{-it\log(m+n)}}{m^{s_0}(m+n)^\sigma}.
\end{split}
\label{3-5}
\end{equation}
Fix $m\in \mathbb{N}$ and set 
$$f(x)=\frac{t}{2\pi}\log(m+x),\quad g(x)=(m+x)^{-\sigma},$$
$(a,b)=(x,N)$ in Lemma \ref{L-3-2}. Then we have 
$$(\alpha,\beta)=\left( \frac{t}{2\pi (m+N)},\ \frac{t}{2\pi (m+x)}\right).$$
We see that
$$|f'(x)|=\frac{|t|}{2\pi (m+x)}\leq \frac{|t|}{2\pi x}\leq \frac{1}{C}<1.$$
When $\sigma>0$, the function $g(x)$ is decreasing and so we can apply Lemma
\ref{L-3-2}.   By taking a small $\eta$, we obtain from \eqref{3-3} that 
\begin{align*}
&\sum_{x<n\leq N} \frac{e^{it\log(m+n)}}{(m+n)^\sigma}=\int_{x}^{N} \frac{1}{(m+u)^{\sigma-it}}du+O\left((m+x)^{-\sigma}\right).
\end{align*}
Considering complex conjugates on the both sides, we have
\begin{align}
\sum_{x<n\leq N}\frac{1}{(m+n)^s}&=
\sum_{x<n\leq N} \frac{e^{-it\log(m+n)}}{(m+n)^\sigma}=\int_{x}^{N} \frac{1}{(m+u)^s}du+O\left((m+x)^{-\sigma}\right)\notag\\
& =\frac{(m+N)^{1-s}-(m+x)^{1-s}}{1-s}+O\left((m+x)^{-\sigma}\right). \label{conjugate}
\end{align}
In other words, denoting the above error term by $E(s;x,m,N)$, we find that
this function is entire in $s$ (the point $s=1$ is a removable singularity)
and satisfies
\begin{align}\label{E}
E(s;x,m,N)=O\left((m+x)^{-\sigma}\right)
\end{align}
uniformly in $N$ in the region $\sigma>0$. 
Using \eqref{conjugate}, we find that the second term of \eqref{3-5} is equal to
\begin{equation}
\begin{split}
& \frac{1}{1-s}\sum_{m=1}^\infty\frac{1}{m^{{s_0}}(m+N)^{s-1}}-
\frac{1}{1-s}\sum_{m=1}^\infty\frac{1}{m^{s_0} (m+x)^{s-1}}\\
& \qquad +\sum_{m=1}^\infty\frac{E(s;x,m,N)}{m^{s_0}}
\end{split}
\label{3-6}
\end{equation}
(where the first two sums are convergent in $\sigma_0+\sigma>2$,
while the last sum is convergent in $\sigma_0+\sigma>1$ because of \eqref{E}),
whose first term is cancelled with $A_2$.
Therefore now we have
\begin{align}
\zeta_2(s_0,s)&=\sum_{m=1}^{\infty}\sum_{n\leq x}\frac{1}{m^{s_0} (m+n)^{s}}
  -\frac{1}{1-s}\sum_{m=1}^\infty\frac{1}{m^{s_0} (m+x)^{s-1}}\notag\\
& \qquad +\sum_{m=1}^\infty\frac{E(s;x,m,N)}{m^{s_0}}-A_3-A_4 \label{suzumenooyado}
\end{align} 
in the region $\sigma>\max(0,2-\sigma_0)$, $s\neq 1$.
Letting $N \to \infty$, and noting \eqref{S_4}, \eqref{S_3} and \eqref{E},
we obtain the proof of Theorem \ref{T-3-1}.
\end{proof}

\section{Proof of Theorem \ref{T-1-2}} \label{sec-4}

In this section, using Theorem \ref{T-3-1}, we give the proof of Theorem \ref{T-1-2}.

\begin{proof}[Proof of Theorem \ref{T-1-2}]
Let ${s_0}={\sigma_0}+i{t_0}\in \mathbb{C}$ with ${\sigma_0}>1$ and 
$s=\sigma+it\in \mathbb{C}\setminus\{1\}$ with $1/2<\sigma\leq 1$, 
$\sigma_0+\sigma>2$.
Setting $C=2\pi$ and $x=t$ 
in \eqref{3-2}, we easily see that the second term on the right-hand side is
$O\left( t^{-\sigma}\right)$, so
we have
\begin{equation}
\label{4-1}
\begin{split}
\zeta_2({s_0},s) & =\sum_{m=1}^\infty \sum_{1\leq n \leq t}\frac{1}{m^{{s_0}}(m+n)^{s}}
+O\left( t^{-\sigma}\right)\quad (t\to \infty).
\end{split}
\end{equation}
We denote the first term on the right-hand side by $\Sigma_1({s_0},s)$. Let $M(n_1,n_2)=\max\{n_1,n_2,2\}$. Then 
\begin{align}
& \int_{2}^{T}|\Sigma_1({s_0},s)|^2 dt \notag\\
& =\int_{2}^{T}\sum_{m_1\geq 1}\sum_{n_1\leq t}\frac{1}{m_1^{{s_0}}(m_1+n_1)^{\sigma+it}}\sum_{m_2\geq 1}\sum_{n_2\leq t}\frac{1}{m_2^{{\overline{s_0}}}(m_2+n_2)^{\sigma-it}} dt\notag\\
& =\sum_{m_1\geq 1}\sum_{m_2\geq 1}\frac{1}{m_1^{s_0}m_2^{\overline{s_0}}}\sum_{n_1\leq T}\sum_{n_2\leq T}\frac{1}{(m_1+n_1)^{\sigma}(m_2+n_2)^{\sigma}}\notag\\
& \qquad \times \int_{M(n_1,n_2)}^{T}\left(\frac{m_2+n_2}{m_1+n_1}\right)^{it}dt\notag\\
& =\sum_{m_1\geq 1}\sum_{m_2\geq 1}\frac{1}{m_1^{s_0}m_2^{\overline{s_0}}}\sum_{n_1\leq T}\sum_{n_2\leq T \atop m_1+n_1=m_2+n_2}\frac{1}{(m_1+n_1)^{2\sigma}}(T-M(n_1,n_2))\notag\\
& \quad + \sum_{m_1\geq 1}\sum_{m_2\geq 1}\frac{1}{m_1^{s_0}m_2^{\overline{s_0}}}\sum_{n_1\leq T}\sum_{n_2\leq T \atop m_1+n_1\not=m_2+n_2}\frac{1}{(m_1+n_1)^{\sigma}(m_2+n_2)^\sigma}\notag\\
& \quad \times \frac{e^{iT\log((m_2+n_2)/(m_1+n_1))}-e^{iM(n_1,n_2)\log((m_2+n_2)/(m_1+n_1))}}{i\log((m_2+n_2)/(m_1+n_1))}.\label{mean-val}
\end{align}
We denote the first and the second term on the right-hand side by $S_1T-S_2$ and $S_3$, respectively. As for $S_1$, setting $k=m_1+n_1(=m_2+n_2)$, we have
\begin{align*}
S_1&= \sum_{k=2}^\infty \left(\sum_{m_1=1}^{k-1} \sum_{m_2=1}^{k-1}\frac{1}{m_1^{s_0}m_2^{\overline{s_0}}}\right)\frac{1}{k^{2\sigma}} \\
& \ -\sum_{m_1\geq 1\atop {m_2\geq 1}}\frac{1}{m_1^{s_0}m_2^{\overline{s_0}}}\bigg\{ \sum_{n_1> T \atop {n_2\leq T \atop m_1+n_1=m_2+n_2}}+\sum_{n_1\leq T \atop {n_2> T \atop m_1+n_1=m_2+n_2}}+\sum_{n_1> T\atop {n_2> T \atop m_1+n_1=m_2+n_2}}\bigg\}\frac{1}{(m_1+n_1)^{2\sigma}}.
\end{align*}
We further denote the second term on the right-hand side by $-(U_1+U_2+U_3)$, which is equal to $-(U_1+U_3)-(\overline{U_1}+U_3)+U_3$ because $U_2=\overline{U_1}$. Since ${\sigma_0}>1$, we have
\begin{align*}
U_1+U_3& \ll \sum_{m_1\geq 1\atop {m_2\geq 1}}\frac{1}{(m_1m_2)^{\sigma_0}} \sum_{n_1> T}\frac{1}{(m_1+n_1)^{2\sigma}}\\
&\ll \sum_{m_1\geq 1\atop {m_2\geq 1}}\frac{1}{(m_1m_2)^{\sigma_0}} \int_{T}^\infty \frac{du}{(m_1+u)^{2\sigma}}\\
&\ll \sum_{m_1\geq 1\atop {m_2\geq 1}}\frac{1}{(m_1m_2)^{\sigma_0}(m_1+T)^{2\sigma-1}}\ll T^{1-2\sigma}.
\end{align*}
Similarly we obtain $\overline{U_1}+U_3, U_3 \ll T^{1-2\sigma}$. 
Therefore we have
\begin{equation}
S_1T=\zeta_2^{[2]}({s_0},2\sigma)T+O\left( T^{2-2\sigma}\right). \label{4-2}
\end{equation}

As for $S_2$, since
$$M(n_1,n_2)=\max\{n_1,n_2,2\}\leq m_1+n_1 (=m_2+n_2),$$
we have
\begin{align*}
S_2& \ll \sum_{m_1\geq 1}\sum_{m_2\geq 1}\frac{1}{(m_1m_2)^{\sigma_0}}\sum_{n_1\leq T\atop {n_2\leq T \atop m_1+n_1=m_2+n_2}}\frac{1}{(m_1+n_1)^{2\sigma-1}}\\
& \ll \sum_{m_1\geq 1}\sum_{m_2\geq 1}\frac{1}{(m_1m_2)^{\sigma_0}}\sum_{n_1\leq T}\frac{1}{(m_1+n_1)^{2\sigma-1}}\\
& \ll \sum_{m_1\geq 1}\frac{1}{m_1^{\sigma_0}}\sum_{m_2\geq 1}\frac{1}{m_2^{\sigma_0}}\sum_{n_1\leq T}\frac{1}{n_1^{2\sigma-1}}\\
& \ll 
\begin{cases}
T^{2-2\sigma} & (1/2<\sigma < 1)\\
\log T & (\sigma=1),
\end{cases}
\end{align*}
because ${\sigma_0}>1$. 

As for $S_3$, we have
\begin{align*}
S_3& \ll \sum_{m_1,m_2\geq 1}\frac{1}{(m_1m_2)^{\sigma_0}}\sum_{n_1, n_2\leq T \atop m_1+n_1<m_2+n_2\leq 2(m_1+n_1)}\frac{1}{(m_1+n_1)^{\sigma}(m_2+n_2)^\sigma}\frac{1}{\log\frac{m_2+n_2}{m_1+n_1}}\\
& \ \ + \sum_{m_1,m_2\geq 1}\frac{1}{(m_1m_2)^{\sigma_0}}\sum_{n_1, n_2\leq T \atop m_2+n_2> 2(m_1+n_1)}\frac{1}{(m_1+n_1)^{\sigma}(m_2+n_2)^\sigma}\frac{1}{\log\frac{m_2+n_2}{m_1+n_1}}.
\end{align*}
We denote the first and the second term by $W_1$ and $W_2$, respectively. 
As for $W_2$, we have
\begin{align*}
W_2& \ll \sum_{m_1,m_2\geq 1}\frac{1}{(m_1m_2)^{\sigma_0}}\sum_{n_1, n_2\leq T \atop m_2+n_2> 2(m_1+n_1)}\frac{1}{(m_1+n_1)^{\sigma}(m_2+n_2)^\sigma}\\
& \ll \sum_{m_1,m_2\geq 1}\frac{1}{(m_1m_2)^{\sigma_0}}\sum_{n_1\leq T}\frac{1}{n_1^{\sigma}}\sum_{n_2\leq T}\frac{1}{n_2^\sigma}\\
& \ll 
\begin{cases}
T^{2-2\sigma} & (1/2<\sigma < 1)\\
(\log T)^2 & (\sigma=1).
\end{cases}
\end{align*}
As for $W_1$, setting $r=(m_2+n_2)-(m_1+n_1)$, we have
\begin{align*}
W_1& \ll \sum_{m_1,m_2\geq 1}\frac{1}{(m_1m_2)^{\sigma_0}}\sum_{n_1\leq T}\sum_{r=1}^{m_1+n_1}\frac{1}{(m_1+n_1)^\sigma(m_1+n_1+r)^\sigma}\frac{1}{\log\left(1+\frac{r}{m_1+n_1}\right)}\\
& \ll \sum_{m_1,m_2\geq 1}\frac{1}{(m_1m_2)^{\sigma_0}}\sum_{n_1\leq T}\frac{1}{(m_1+n_1)^{2\sigma}}\sum_{r=1}^{m_1+n_1}\frac{m_1+n_1}{r}\\
& \ll \sum_{m_1,m_2\geq 1}\frac{1}{(m_1m_2)^{\sigma_0}}\sum_{n_1\leq T}\frac{1}{(m_1+n_1)^{2\sigma-1}}\log(m_1+n_1)\\
& \ll 
\begin{cases}
T^{2-2\sigma}\log T & (1/2<\sigma < 1)\\
(\log T)^2 & (\sigma=1).
\end{cases}
\end{align*}
Combining these results, we obtain
\begin{align*}
& \int_{2}^{T}|\Sigma_1({s_0},s)|^2 dt =\zeta_2^{[2]}({s_0},2\sigma)T+
\begin{cases}
O\left(T^{2-2\sigma}\log T\right) & (1/2<\sigma < 1)\\
O\left((\log T)^2\right) & (\sigma=1).
\end{cases}
\end{align*}
Therefore we have
\begin{align}
& \int_{2}^{T}|\zeta_2({s_0},s)|^2 dt \notag\\
& =\int_{2}^{T}|\Sigma_1({s_0},s)+O\left(t^{-\sigma}\right)|^2 dt \notag\\
& =\int_{2}^{T}|\Sigma_1({s_0},s)|^2 dt+O\left(\int_{2}^{T}|\Sigma_1({s_0},s)|\,t^{-\sigma}dt\right)+O\left(\int_{2}^{T}t^{-2\sigma}dt \right).\label{4-2-2}
\end{align}
We see that the third term on the right-hand side is equal to $O(1)$ because $\frac{1}{2}<\sigma\leq 1$. As for the second term, by the Cauchy-Schwarz inequality, we see that
\begin{align*}
& \int_{2}^{T}|\Sigma_1({s_0},s)|\,t^{-\sigma}dt \\
& \ll \left(\int_{2}^{T}|\Sigma_1({s_0},s)|^2 dt\right)^{1/2}\cdot \left(\int_{2}^{T}\,t^{-2\sigma}dt\right)^{1/2}\\
& =\left( 
\begin{cases}
O(T)+O\left(T^{2-2\sigma}\log T\right) & (1/2<\sigma < 1)\\
O(T)+O\left((\log T)^2\right) & (\sigma=1)
\end{cases}
\right)^{1/2}\cdot O(1)^{1/2}\\
& \ll T^{1/2}.
\end{align*}
This completes the proof of Theorem \ref{T-1-2}.

\end{proof}

\section{The double analogue of the Euler constant} \label{sec-4.25}

Let $\gamma$ be the Euler constant defined by 
$$\gamma=\lim_{N\to \infty}\left( \sum_{n=1}^{N}\frac{1}{n}-\log N\right),$$
which satisfies that
\begin{equation}\label{gamma}
\lim_{s\to 1}\left\{\zeta(s)-\frac{1}{s-1}\right\}=\gamma.
\end{equation}
Here we define analogues of the Euler constant corresponding to the double zeta-function as follows. For ${s_0}\in \mathbb{C}$ with $\Re {s_0}>1$, we let
\begin{equation}
\gamma_2({s_0})=\lim_{N \to \infty}\sum_{m\geq 1}\frac{1}{m^{s_0}}\left\{\sum_{1\leq n\leq N} \frac{1}{(m+n)}-\log (m+N)\right\}. \label{Eu-1}
\end{equation} 
Then we obtain the following.

\begin{prop} \label{P-Euler}
For ${s_0}\in \mathbb{C}$ with $\Re {s_0}>1$,
\begin{equation}
\lim_{s\to 1}\left\{ \zeta_2(s_0,s)-\frac{\zeta(s_0)}{s-1}\right\}=\gamma_2({s_0}).\label{Eu-2}
\end{equation}
In particular,
\begin{equation}
\gamma_2({s_0})=\zeta({s_0})\gamma-\zeta_2(1,{s_0})-\zeta({s_0}+1).\label{Eu-3}
\end{equation}
\end{prop}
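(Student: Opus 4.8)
The plan is to prove the explicit formula \eqref{Eu-3} by two separate computations — one evaluating the limit on the left of \eqref{Eu-2}, the other evaluating $\gamma_2(s_0)$ directly from its definition \eqref{Eu-1} — and then to observe that \eqref{Eu-2} is precisely the assertion that these two evaluations agree.

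For the limit, the key algebraic input is the ``harmonic'' (stuffle) relation
\[
\zeta_2(s_0,s)+\zeta_2(s,s_0)+\zeta(s_0+s)=\zeta(s_0)\zeta(s),
\]
which for $\Re s_0>1$ and $\Re s>1$ follows at once by splitting the absolutely convergent double series $\zeta(s_0)\zeta(s)=\sum_{a,b\geq 1}a^{-s_0}b^{-s}$ according to $a<b$, $a=b$, $a>b$ and identifying the three pieces via \eqref{1-1}; since each function here continues meromorphically, the identity persists throughout $\mathbb{C}^2$ off its polar loci. Now fix $s_0$ with $\Re s_0>1$ and let $s\to 1$. From the description of the singular locus of $\zeta_2$ recalled in Section \ref{Sec-1}, the maps $s\mapsto\zeta_2(s,s_0)$ and $s\mapsto\zeta(s_0+s)$ are holomorphic near $s=1$ (a singularity there would force $s_0\in\{1,0,-1,-3,\dots\}$, impossible when $\Re s_0>1$), while $\zeta(s)=\tfrac{1}{s-1}+\gamma+O(|s-1|)$. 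Hence $\zeta_2(s_0,s)$ has a simple pole at $s=1$ of residue $\zeta(s_0)$, and
\[
\lim_{s\to 1}\Bigl\{\zeta_2(s_0,s)-\frac{\zeta(s_0)}{s-1}\Bigr\}=\gamma\,\zeta(s_0)-\zeta_2(1,s_0)-\zeta(s_0+1).
\]

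For $\gamma_2(s_0)$, write $H_k=\sum_{l=1}^k l^{-1}$, so that $\sum_{1\leq n\leq N}(m+n)^{-1}=H_{m+N}-H_m$ and the summand in \eqref{Eu-1} equals $m^{-s_0}(H_{m+N}-H_m-\log(m+N))$. By $H_k=\log k+\gamma+O(1/k)$ this tends, as $N\to\infty$, to $m^{-s_0}(\gamma-H_m)$, the difference being $O\bigl(m^{-\sigma_0}(m+N)^{-1}\bigr)$, which is summable in $m$ and small uniformly in $m$; this licenses interchanging $\lim_N$ with $\sum_m$ (the limiting series converges since $\gamma-H_m=-\log m+O(1/m)$ and $\sigma_0>1$). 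Therefore $\gamma_2(s_0)=\sum_{m\geq 1}m^{-s_0}(\gamma-H_m)=\gamma\,\zeta(s_0)-\sum_{m\geq 1}H_m m^{-s_0}$, and splitting $H_m=H_{m-1}+\tfrac1m$ (with $H_0=0$) yields $\sum_{m\geq 1}H_m m^{-s_0}=\zeta_2(1,s_0)+\zeta(s_0+1)$ via \eqref{1-1}. This proves \eqref{Eu-3}, and comparison with the displayed limit proves \eqref{Eu-2}.

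The argument is mostly bookkeeping; the two points needing genuine care are: (i) checking that $\Re s_0>1$ really keeps $s=1$ off the singular loci of $\zeta_2(s,s_0)$ and $\zeta(s_0+s)$, so that the limit in \eqref{Eu-2} exists at all — this is exactly where the hypothesis is used; and (ii) producing an $m$-summable majorant uniform in $N$ to justify interchanging $\lim_N$ with the $m$-sum in \eqref{Eu-1}, for which the classical $H_k=\log k+\gamma+O(1/k)$ suffices. One can also bypass the stuffle relation and prove \eqref{Eu-2} directly by writing, for $\Re s>1$, $\zeta_2(s_0,s)-\zeta(s_0)/(s-1)=\sum_{m\geq 1}m^{-s_0}\bigl(\zeta(s)-\tfrac{1}{s-1}-\sum_{l\leq m}l^{-s}\bigr)$ and letting $s\to1$ term by term, the interchange being licensed by $\sum_{l>m}l^{-s}-\tfrac{1}{s-1}\ll 1+\log m$ for real $s$ near $1$; but one still needs the identity $\sum_{m\geq 1}H_m m^{-s_0}=\zeta_2(1,s_0)+\zeta(s_0+1)$ to reach \eqref{Eu-3}.
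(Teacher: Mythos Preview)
Your proof is correct, and it takes a genuinely different route from the paper's.

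The paper establishes \eqref{Eu-2} first, by invoking the Euler--Maclaurin representation \eqref{3-4} with $N=0$: it computes the limit on the left of \eqref{Eu-2} in terms of $\zeta'(s_0)$ and an explicit integral, and then manipulates that integral term-by-term until it literally becomes the defining expression \eqref{Eu-1} for $\gamma_2(s_0)$. Only afterwards does the paper bring in the stuffle relation $\zeta(s_0)\zeta(s)=\zeta_2(s_0,s)+\zeta_2(s,s_0)+\zeta(s_0+s)$ to deduce \eqref{Eu-3} from \eqref{Eu-2}.

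You reverse the logic: you evaluate the limit in \eqref{Eu-2} via the stuffle relation, evaluate $\gamma_2(s_0)$ directly from \eqref{Eu-1} via the elementary asymptotic $H_k=\log k+\gamma+O(1/k)$ and the identity $\sum_{m\geq 1}H_m\,m^{-s_0}=\zeta_2(1,s_0)+\zeta(s_0+1)$, and observe that both computations land on the right-hand side of \eqref{Eu-3}. Your argument is more elementary in that it avoids the Euler--Maclaurin machinery of Section~\ref{sec-3} altogether; the paper's argument has the virtue of reusing that machinery, which it has already set up for other purposes. Both approaches ultimately rely on the stuffle relation for the passage between \eqref{Eu-2} and \eqref{Eu-3}, and your careful check that $\Re s_0>1$ keeps $s=1$ off the singular loci of $\zeta_2(s,s_0)$ and $\zeta(s_0+s)$ is exactly what is needed there.
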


\begin{proof}
Applying \eqref{3-4} with $N=0$, we have
\begin{align*}
& \lim_{s\to 1}\left\{ \zeta_2(s_0,s)-\frac{\zeta(s_0)}{s-1}\right\}\\
& =\lim_{s\to 1}\bigg\{ \frac{\sum_{m\geq 1}m^{-{s_0}-s+1}-\zeta({s_0})}{s-1} -s\sum_{m\geq 1}\frac{1}{m^{s_0}}\int_{m}^\infty \frac{u-[u]-1/2}{u^{s+1}}du \\
& \qquad -\frac{1}{2}\sum_{m\geq 1}\frac{1}{m^{{s_0}+s}}\bigg\}\\
&  =\zeta'({s_0})-\sum_{m\geq 1}\frac{1}{m^{{s_0}}}\int_{m}^\infty \frac{u-[u]}{u^{2}}du \\
& \quad +\frac{1}{2}\sum_{m\geq 1}\frac{1}{m^{{s_0}}}\int_{m}^\infty \frac{1}{u^2}du -\frac{1}{2}\zeta({s_0}+1),
\end{align*}
where the third and the fourth terms are cancelled. Hence, from 
$$\zeta'({s_0})=-\sum_{m\geq 1}\frac{\log m}{m^{{s_0}}},$$
the right-hand side of the above equation can be rewritten as 
\begin{align*}
& \zeta'({s_0})-\sum_{m\geq 1}\frac{1}{m^{{s_0}}}\lim_{K\to \infty}\sum_{k=m}^{K+m-1}\int_{k}^{k+1}\frac{u-[u]}{u^{2}}du\\
& =\zeta'({s_0})-\sum_{m\geq 1}\frac{1}{m^{{s_0}}}\lim_{K\to \infty}\sum_{k=m}^{K+m-1}\int_{k}^{k+1}\left(\frac{1}{u}-\frac{k}{u^{2}}\right)du\notag\\
& =\zeta'({s_0})-\sum_{m\geq 1}\frac{1}{m^{{s_0}}}\lim_{K\to \infty}\left(\log(m+K)-\log m-\sum_{k=m}^{m+K-1}\frac{1}{k+1}\right)\notag\\
& =\lim_{K\to \infty}\left\{\sum_{m\geq 1}\frac{1}{m^{{s_0}}}\left(\sum_{n=1}^{K}\frac{1}{m+n}-\log(m+K) \right)\right\}=\gamma_2({s_0}),\notag
\end{align*}
which implies \eqref{Eu-2}.  Note that Arakawa and Kaneko \cite[Proposition 4]{AK}
already showed that $\zeta_2({s_0},s)$, as a function in $s$, has a simple pole at 
$s=1$ with its residue $\zeta({s_0})$, where ${s_0}\in \mathbb{C}$ with $\Re {s_0}>1$. 
Suppose ${s_0}\in \mathbb{C}$ with $\Re {s_0}>1$ and $\Re s>1$. Then it is well-known that
$$\zeta({s_0})\zeta(s)=\zeta_2({s_0},s)+\zeta_2(s,{s_0})+\zeta({s_0}+s).$$
By \eqref{gamma} and \eqref{Eu-2}, we have
\begin{align*}
& \zeta({s_0})\left(\frac{1}{s-1}+\gamma+o(s-1)\right)\\
& \qquad =\left(\frac{\zeta({s_0})}{s-1}+\gamma_2(s_0)+o(s-1)\right)+\zeta_2(s,{s_0})+\zeta({s_0}+s).
\end{align*}
Letting $s\to 1$, we obtain \eqref{Eu-3}.
This completes the proof.
\end{proof}

\section{The second approximation theorem} \label{sec-4.5}

In the previous section, we gave the proof of Theorem \ref{T-1-2} by use of \eqref{4-1} 
which comes from Theorem \ref{T-3-1}. However Theorem \ref{T-3-1} holds under the 
conditions ${\sigma}>0$ and $\sigma_0+\sigma>2$. Hence we cannot use it for 
$3/2<\sigma_0+\sigma\leq 2$.
In order to prove a mean value result in the latter case, we have to prepare another 
approximate formula for $\zeta_2(s_0,s)$.

We begin with \eqref{suzumenooyado}.
As was discussed in the proof of Theorem \ref{T-3-1}, all but the second term on the
right-hand side of \eqref{suzumenooyado} are convergent in 
$\sigma>0, \sigma_0+\sigma>1$, so the remaining task is to study the second term.

%

First we assume ${\sigma_0}+\sigma>2$, $s\neq 1$.  Then by the Euler-Maclaurin
formula we have
\begin{align}
& \frac{1}{1-s}\sum_{m=1}^\infty \frac{1}{m^{s_0} (m+x)^{s-1}} \notag\\
& =\frac{1}{1-s}\int_{1}^\infty \frac{dy}{y^{s_0} x^{s-1}\left(1+\frac{y}{x}\right)^{s-1}} \notag \\
& \ \ +\frac{1}{1-s}\int_{1}^\infty \left(y-[y]-\frac{1}{2}\right)\left(-\frac{{s_0}}{y^{{s_0}+1}(y+x)^{s-1}}+\frac{1-s}{y^{s_0}(y+x)^s}\right)dy \notag\\
& \ \ +\frac{1}{2(1-s)}(1+x)^{1-s}\notag\\
&=g(s_0,s;x)+Y_2+Y_3,\label{5-3}
\end{align}
say.
Obviously $Y_3$ is defined for any $s\in\mathbb{C}\setminus\{1\}$ and satisfies
$Y_3=O\left(t^{-1} x^{1-\sigma}\right)$.
Next consider $Y_2$.   We have
\begin{align*}
&\frac{1}{1-s}\int_{1}^\infty \left(y-[y]-\frac{1}{2}\right)
\frac{{s_0}}{y^{{s_0}+1}(y+x)^{s-1}}dy \ll 
\frac{1}{t}\int_{1}^\infty \frac{dy}{y^{{\sigma_0}+1}(y+x)^{\sigma-1}}\\
&\qquad \ll t^{-1}x^{1-\sigma}\int_{1}^\infty \frac{dy}{y^{{\sigma_0}+1}}
\ll t^{-1}x^{1-\sigma}
\end{align*}
for $\sigma_0>0$, and
\begin{align*}
\frac{1}{1-s} & \int_{1}^\infty \left(y-[y]-\frac{1}{2}\right)\frac{1-s}{y^{s_0}(y+x)^s}dy \ll \int_{1}^\infty \frac{dy}{y^{{\sigma_0}}(y+x)^{\sigma}}\\
& \ll \left(\int_{1}^{x} +\int_{x}^\infty\right)\frac{dy}{y^{{\sigma_0}}(y+x)^{\sigma}}\\
& \ll \int_{1}^{x}\frac{dy}{y^{{\sigma_0}}x^{\sigma}} +\int_{x}^\infty\frac{dy}{y^{{\sigma_0}+\sigma}}\\
& = 
\begin{cases}
O\left(x^{1-{\sigma_0}-\sigma}\right)
    & (0< {\sigma_0}<1;\,{\sigma_0}+\sigma>1)\\
O\left(x^{-\sigma}\log x\right)
    & ({\sigma_0}=1;\,{\sigma_0}+\sigma>1)\\
O\left(x^{-\sigma}\right)
    & ({\sigma_0}>1;\,{\sigma_0}+\sigma>1).
\end{cases}
\end{align*}
Therefore now we find that $Y_2+Y_3$ can be continued to the region $\sigma_0>0$,
${\sigma_0}+\sigma>1$ and $s\neq 1$, and in this region satisfies
\begin{align}\label{Y_2+Y_3}
& Y_2+Y_3 = O(t^{-1}x^{1-\sigma})+
\begin{cases}
O\left(x^{1-{\sigma_0}-\sigma}\right)
   & (0< {\sigma_0}<1;\,{\sigma_0}+\sigma>1)\\
O\left(x^{-\sigma}\log x\right)
   & ({\sigma_0}=1;\,{\sigma_0}+\sigma>1)\\                                              
O\left(x^{-\sigma}\right)                  
    & ({\sigma_0}>1;\,{\sigma_0}+\sigma>1).
\end{cases}
\end{align}

Next we consider $g(s_0,s;x)$.
Here we invoke the classical Mellin-Barnes
integral formula, that is
\begin{align}
  \label{5-4}
   (1+\lambda)^{-s}=\frac{1}{2\pi i}\int_{(c)}\frac{\Gamma(s+z)\Gamma(-z)}
      {\Gamma(s)}\lambda^z dz,
\end{align}
where $s$, $\lambda$ are complex numbers with $\sigma=\Re s>0$, $|\arg\lambda|<\pi$,
$\lambda\neq 0$, $c$ is real with $-\sigma<c<0$, and the path
$(c)$ of integration is the vertical line $\Re z=c$.
(Formula \eqref{5-4} has already been successfully used in the theory of multiple
zeta-functions; see \cite{M1,Mat-JNT,Mat-NMJ}).

\begin{lemma} \label{L-5-2} 
The function $g({s_0},s;x)$ can be continued meromorphically to the region 
$\sigma_0<3/2$ and $\sigma>1/2$, 
and satisfies 
\begin{align*}
g({s_0},s;x)=
\begin{cases}
O\left(t^{-1}x^{1-\sigma}+t^{\sigma_0-2}x^{2-\sigma-\sigma_0}
+t^{-1/2}x^{1/2-\sigma}\right) & (s_0\neq 1)\\
O\left(t^{-1}x^{1-\sigma}(\log t+\log x)+t^{-1/2}x^{1/2-\sigma}\right) & (s_0=1)
\end{cases}
\end{align*}
in this region, except for the points on the singularities
\begin{align}\label{sing}
s=1,\quad s_0+s=2,1,0,-1,-2,-3,-4,\ldots.
\end{align}
\end{lemma}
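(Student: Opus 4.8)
The plan is to represent $g(s_0,s;x)$ by a Mellin--Barnes integral, read off from it the meromorphic continuation, and then estimate the resulting terms by Stirling's formula. First I would rewrite $g(s_0,s;x)=\dfrac{x^{1-s}}{1-s}\int_1^\infty y^{-s_0}(1+y/x)^{1-s}\,dy$ (valid for $\sigma_0+\sigma>2$) and, since $\Re(1-s)\ge 0$ when $\sigma\le 1$, split off one factor, $(1+y/x)^{1-s}=(1+y/x)(1+y/x)^{-s}$. Applying \eqref{5-4} to $(1+y/x)^{-s}$ (legitimate because $\sigma>0$), with contour $\Re z=c$, interchanging the $y$- and $z$-integrations, and evaluating the elementary integrals $\int_1^\infty y^{z-s_0}\,dy$ and $\int_1^\infty y^{z+1-s_0}\,dy$, one obtains
\[
g(s_0,s;x)=\frac{x^{1-s}}{1-s}\cdot\frac{1}{2\pi i}\int_{(c)}H(z)\,dz,\qquad
H(z):=\frac{\Gamma(s+z)\Gamma(-z)}{\Gamma(s)}\left(\frac{x^{-z}}{s_0-z-1}+\frac{x^{-z-1}}{s_0-z-2}\right),
\]
valid, with $c\in(-\sigma,\sigma_0-2)$, whenever $\sigma>0$ and $\sigma_0+\sigma>2$: the interchange is justified by absolute convergence, and $\sigma_0+\sigma>2$ is precisely what makes the admissible interval for $c$ nonempty. (Within $\sigma_0<3/2$ this starting region automatically satisfies $\sigma>1/2$.)

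Next I would move the contour rightward to $\Re z=1/2$. Between $\Re z=c$ and $\Re z=1/2$ the only poles of $H$ are the simple poles at $z=0$ (from $\Gamma(-z)$), $z=s_0-1$ and $z=s_0-2$; the poles $z=-s,-s-1,\dots$ of $\Gamma(s+z)$ stay to the left and $z=1,2,\dots$ to the right. Since $H(z)$ decays exponentially in $|\Im z|$ uniformly on the strip, the shift is legitimate and produces
\[
g(s_0,s;x)=-\frac{x^{1-s}}{1-s}\sum_{w\in\{0,\,s_0-1,\,s_0-2\}}\mathrm{Res}_{z=w}H(z)+\frac{x^{1-s}}{2\pi i(1-s)}\int_{(1/2)}H(z)\,dz.
\]
The right-hand side is manifestly meromorphic in $\{\sigma_0<3/2,\ \sigma>1/2\}$, with poles only at $s=1$ (from $1/(1-s)$), at $s_0+s=2,1,0,-1,\dots$ (from the factors $\Gamma(s+s_0-1)$ and $\Gamma(s+s_0-2)$ appearing in the residues), and at $s_0=1$ (where $z=0$ and $z=s_0-1$ collide). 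This yields the asserted meromorphic continuation, with singular set exactly \eqref{sing}.

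It then remains to estimate each term. Computing the residues, $-\dfrac{x^{1-s}}{1-s}\mathrm{Res}_{z=0}H=\dfrac{x^{1-s}}{1-s}\bigl(\tfrac{1}{s_0-1}+\tfrac{x^{-1}}{s_0-2}\bigr)=O(t^{-1}x^{1-\sigma})$ for $s_0\neq1$; the residue at $z=s_0-1$ contributes $\dfrac{\Gamma(s+s_0-1)\Gamma(1-s_0)}{(1-s)\Gamma(s)}x^{2-s-s_0}$, which by Stirling (the imaginary parts of $s$ and $s+s_0-1$ are both $\asymp t$, so $|\Gamma(s+s_0-1)/\Gamma(s)|\asymp t^{\sigma_0-1}$) is $O(t^{\sigma_0-2}x^{2-\sigma-\sigma_0})$; the residue at $z=s_0-2$ gives $O(t^{\sigma_0-3}x^{2-\sigma-\sigma_0})$, which is absorbed; and, since $|x^{1-s}x^{-z}|=x^{1/2-\sigma}$ on $\Re z=1/2$, the last term is $\ll x^{1/2-\sigma}|1-s|^{-1}\int_{(1/2)}\bigl|\Gamma(s+z)\Gamma(-z)/(\Gamma(s)(s_0-z-1))\bigr|\,|dz|$. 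These give the bound for $s_0\neq1$. When $s_0=1$, the poles of $\Gamma(-z)$ and of $1/(s_0-z-1)$ at $z=0$ merge into a double pole whose residue contains $\tfrac{d}{dz}x^{-z}\big|_{z=0}=-\log x$ and $\tfrac{d}{dz}\Gamma(s+z)\big|_{z=0}=\Gamma(s)\psi(s)$ (with $\psi$ the digamma function); since $\psi(s)=\log s+O(1/|s|)$ we have $|\psi(s)|\ll\log t$, so the first term is replaced by $O(t^{-1}x^{1-\sigma}(\log t+\log x))$ while the remaining integral still contributes $O(t^{-1/2}x^{1/2-\sigma})$. In particular the apparent pole at $s_0=1$ is removable.

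The main obstacle will be the estimate of the integral over $\Re z=1/2$: one has to keep careful track of the exponential factors $e^{-\pi|\cdot|/2}$ produced by Stirling for $\Gamma(s+z)$, $\Gamma(-z)$ and $1/\Gamma(s)$, observe that $|\Im(s+z)|+|\Im z|-|\Im s|\ge 0$ with equality precisely on the segment $-t\le\Im z\le 0$, and thereby localize the integral to that segment; there a direct computation (using $|s_0-z-1|\gg 1+|\Im z|$, which needs $\sigma_0\neq 3/2$, and that the $x^{-z-1}$-term is smaller by a factor $x^{-1}$) gives $\int_{(1/2)}|\cdots|\,|dz|\ll t^{1/2}$ and hence the claimed $O(t^{-1/2}x^{1/2-\sigma})$. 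The remaining points --- justifying Fubini's theorem and the contour shift via the uniform exponential decay of the integrand in $|\Im z|$ --- are routine but should be recorded.
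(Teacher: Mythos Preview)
Your argument is correct and follows the same overall scheme as the paper: represent $g(s_0,s;x)$ via the Mellin--Barnes formula \eqref{5-4}, shift the contour to $\Re z=1/2$, read off the meromorphic continuation from the residues plus the remaining integral, and estimate each piece by Stirling (the paper invokes Lemma~4 of \cite{Mat-JNT} for the integral on $\Re z=1/2$, which is exactly the localization to $-t\le\Im z\le 0$ that you sketch).

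The one point of difference is the preliminary splitting $(1+y/x)^{1-s}=(1+y/x)\,(1+y/x)^{-s}$. This is harmless but unnecessary: since the starting region is the domain of absolute convergence, one may simply begin with $\sigma_0>1$ and $\sigma>1$ (so that $\Re(s-1)>0$) and apply \eqref{5-4} directly to $(1+y/x)^{-(s-1)}$. The paper does this and obtains the cleaner one-term integrand
\[
\frac{\Gamma(s-1+z)\Gamma(-z)}{\Gamma(s-1)}\cdot\frac{x^{-z}}{s_0-1-z},
\]
with only the two residues at $z=0$ and $z=s_0-1$ to collect (your extra pole at $z=s_0-2$ and the second term in $H(z)$ are artifacts of the splitting and, as you note, are absorbed in the end). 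Your stated motivation ``since $\Re(1-s)\ge 0$ when $\sigma\le 1$'' suggests you were trying to apply the Mellin--Barnes identity already in the target strip $1/2<\sigma\le 1$; but there is no need for that, since once the shifted representation is obtained it continues analytically to the whole region $\sigma_0<3/2$, $\sigma>1/2$ regardless of where one started. Apart from this cosmetic detour, the estimates and the treatment of the $s_0=1$ double pole match the paper.
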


\begin{proof}
First we assume that ${\sigma_0}>1$ and $\sigma>1$. Then, applying \eqref{5-4} with $\lambda=y/x$ and replacing $s$ by $s-1$ (because $\sigma-1>0$), we have
\begin{equation}
g({s_0},s;x) =\frac{1}{(2\pi i)(1-s)}\int_{1}^\infty \frac{1}{y^{s_0} x^{s-1}}\int_{(c)}\frac{\Gamma(s-1+z)\Gamma(-z)}{\Gamma(s-1)}\left( \frac{y}{x}\right)^z dz\,dy,
\label{5-7}
\end{equation}
where $1-\sigma<c<0$. Here we see that it is possible to change the order of the integral as follows. Since $1-\sigma<c<0<{\sigma_0}-1$, we have $-{\sigma_0}+c<-1$. This implies that \eqref{5-7} is absolutely convergent with respect to $y$. Moreover, by the Stirling formula, we can easily check that \eqref{5-7} is absolutely convergent with respect to $z$. Therefore, changing the order of the integral on the right-hand side of \eqref{5-7}, we obtain
\begin{align*}
g({s_0},s;x) &=\frac{x^{1-s}}{(2\pi i)(1-s)\Gamma(s-1)}\int_{(c)}{\Gamma(s-1+z)\Gamma(-z)}x^{-z}\int_{1}^\infty y^{z-{s_0}} dy\,dz\\
&=\frac{x^{1-s}}{(2\pi i)(1-s)\Gamma(s-1)}\int_{(c)}\frac{\Gamma(s-1+z)\Gamma(-z)}{x^{z}({s_0}-1-z)} dz.
\end{align*}
Now we temporarily assume that $1<\sigma_0<3/2$.   Then the pole $z=s_0-1$
of the integrand is located in the strip $c<\Re z<1/2$.
We shift the path $(c)$ to $\Re z=1/2$. 
Relevant poles are at $z=0$ and $z={s_0}-1$.
Counting the residues of those poles, we obtain
\begin{align}
g({s_0},s;x)& =\frac{x^{1-s}}{(1-s)\Gamma(s-1)}\bigg\{ \frac{\Gamma(s-1)}{{s_0}-1}+\frac{\Gamma(s+{s_0}-2)\Gamma(1-{s_0})}{x^{{s_0}-1}}\notag\\
& \qquad+\frac{1}{(2\pi i)}\int_{(1/2)}\frac{\Gamma(s-1+z)\Gamma(-z)}{x^{z}({s_0}-1-z)} dz\bigg\}\notag\\
&=\frac{x^{1-s}}{(1-s)({s_0}-1)}+\frac{x^{1-s}}{(1-s)\Gamma(s-1)}\frac{\Gamma(s+{s_0}-2)\Gamma(1-{s_0})}{x^{{s_0}-1}}\notag\\
& +\frac{x^{1-s}}{(2\pi i)(1-s)\Gamma(s-1)}\int_{(1/2)}\frac{\Gamma(s-1+z)\Gamma(-z)}{x^{z}({s_0}-1-z)} dz\notag\\
&=R_1+R_2+R_3,\label{R-tachi}
\end{align}
say.   The last integral can be holomorphically continued to the region
$\sigma_0<3/2$ and $\sigma>1/2$ (because in this region the path does not meet the
poles of the integrand).
Therefore \eqref{R-tachi} gives the meromorphic
continuation of $g({s_0},s;x)$ to this region.
The possible singularities of $R_1$ and $R_2$ are $s_0=1$ and those listed as 
\eqref{sing}.   But $s_0=1$ is actually not a singularity.   Putting $s_0=1+\delta$
and calculating the limit $\delta\to 0$, we find that
\begin{align}\label{at_1}
\biggl.R_1+R_2\biggr|_{s_0=1}&=\frac{x^{1-s}}{1-s}\left(\log x-\gamma-
\frac{\Gamma'}{\Gamma}(s-1)\right).
\end{align}
We can easily check that $R_1=O\left(t^{-1}x^{1-\sigma}\right)$ and 
$R_2=O\left(t^{\sigma_0-2}x^{2-\sigma-\sigma_0}\right)$ 
by the Stirling formula, if $s_0\neq 1$ and 
$(s_0,s)$ is not on the singularities \eqref{sing}.
If $s_0=1$, then from \eqref{at_1} we see that 
$$
R_1+R_2=O\left(t^{-1}x^{1-\sigma}(\log t+\log x)\right).
$$
As for $R_3$, setting $z=1/2+iy$, we have
\begin{align*}
R_3&\ll \frac{x^{1-\sigma}e^{\pi t/2}}{t\cdot t^{\sigma-3/2}}\int_{-\infty}^\infty 
\left|\frac{\Gamma(\sigma-1+it+1/2+iy)\Gamma(-1/2-iy)}{x^{1/2+iy}({\sigma_0}+it_0-1-1/2-iy)}\right|dy \\
&\ll (tx)^{1/2-\sigma}e^{\pi t/2}\int_{-\infty}^\infty (|t+y|+1)^{\sigma-1}
e^{-\pi|t+y|/2}(|y|+1)^{-2}e^{-\pi |y|/2}dy.
\end{align*}
By Lemma 4 of \cite{Mat-JNT}, we find that the above integral is
$O(t^{\sigma-1}e^{-\pi t/2})$, and hence $R_3=O(t^{-1/2}x^{1/2-\sigma})$.
This completes the proof of Lemma \ref{L-5-2}.
\end{proof}

\begin{remark}
By shifting the path more to the right, it is possible to prove that $g({s_0},s;x)$
can be continued meromorphically to the whole space $\mathbb{C}^2$.
\end{remark}

From \eqref{Y_2+Y_3} and Lemma \ref{L-5-2} we find that the right-hand side of
\eqref{5-3} can be continued to the region $\sigma_0<3/2$, $\sigma>1/2$, 
$\sigma_0+\sigma>1$, and satisfies the estimates proved above.
On the other hand, the last three terms on the right-hand side of
\eqref{suzumenooyado} are estimated by \eqref{S_4}, \eqref{S_3}, and 
\eqref{E}, respectively.   

Now set $x=t$.   Then, using \eqref{E} we have
\begin{align*}
\sum_{m=1}^{\infty}\frac{E(s;x,m,N)}{m^{s_0}}
\ll\sum_{m=1}^{\infty}\frac{1}{m^{\sigma_0}(m+t)^{\sigma}}
&\ll\sum_{m\leq t} \frac{1}{m^{\sigma_0}t^{\sigma}}
              +\sum_{m> t}\frac{1}{m^{\sigma_0+\sigma}}\\
&\ll\left\{
\begin{array}{ll}
 t^{1-\sigma_0-\sigma} & (0<\sigma_0<1)\\
 t^{-\sigma}\log t     & (\sigma_0=1)\\
 t^{-\sigma}           & (\sigma_0>1),
\end{array}\right.
\end{align*}
while \eqref{S_4} and \eqref{S_3} imply that the contributions of $A_3$ and $A_4$
vanish when $N\to\infty$.

Collecting all the information, we obtain the following.

\begin{theorem} \label{T-5-3} 
Let ${s_0}={\sigma_0}+i{t_0}\in \mathbb{C}$ with $0< {\sigma_0}< 3/2$ and 
$s=\sigma+it \in \mathbb{C}$ with $\sigma>1/2$, ${\sigma_0}+\sigma>1$,
$s\neq 1$, and $s_0+s\neq 2$.  Then 
\begin{equation}
\begin{split}
& \zeta_2({s_0},s)= \sum_{m=1}^\infty \sum_{n\leq t}\frac{1}{m^{s_0} (m+n)^s}+
\begin{cases}
O\left(t^{1-{\sigma_0}-\sigma}\right)& ({\sigma_0}<1 )\\
O\left(t^{-\sigma}\log t\right)& ({\sigma_0}=1)\\
O\left(t^{-\sigma}\right) & ({\sigma_0}>1).
\end{cases}
\end{split}
\label{5-8}
\end{equation}
\end{theorem}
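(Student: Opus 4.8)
The plan is to collect the ingredients already prepared above. One starts from the meromorphic-continuation formula \eqref{suzumenooyado}, obtained in the proof of Theorem \ref{T-3-1} and valid for $\sigma>\max(0,2-\sigma_0)$, $s\neq1$, and any integer $N>x$. Of its five terms, the leading double sum $\sum_{m\geq1}\sum_{n\leq x}m^{-s_0}(m+n)^{-s}$, the correction sum $\sum_{m\geq1}E(s;x,m,N)m^{-s_0}$, and the terms $A_3,A_4$ are all absolutely convergent, and holomorphic in $s$, throughout $\sigma>0$, $\sigma_0+\sigma>1$, by \eqref{E}, \eqref{S_3} and \eqref{S_4}; moreover $A_3,A_4\to0$ as $N\to\infty$ there. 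Hence the whole issue is the second term $-\frac{1}{1-s}\sum_{m\geq1}m^{-s_0}(m+x)^{1-s}$, which, by the Euler--Maclaurin formula, equals $-g(s_0,s;x)-Y_2-Y_3$ with $g,Y_2,Y_3$ as in \eqref{5-3}.

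First I would dispose of $Y_2$ and $Y_3$ by the elementary estimates displayed above: they continue $Y_2+Y_3$ to $\sigma_0>0$, $\sigma_0+\sigma>1$, $s\neq1$, and satisfy the three-case bound \eqref{Y_2+Y_3}. The real work is $g(s_0,s;x)$, and here I would reproduce the argument of Lemma \ref{L-5-2}: substitute the Mellin--Barnes formula \eqref{5-4} with $\lambda=y/x$ and $s$ replaced by $s-1$, interchange the $y$- and $z$-integrations (justified by Stirling's formula together with the contour condition $1-\sigma<c<0<\sigma_0-1$), carry out the resulting elementary $y$-integral, and shift the $z$-contour from $\Re z=c$ to $\Re z=1/2$, picking up residues at $z=0$ and $z=s_0-1$. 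This yields $g=R_1+R_2+R_3$; the remaining integral on $\Re z=1/2$ provides the meromorphic continuation of $g$, and hence of \eqref{5-3}, across the hyperplane $\sigma_0+\sigma=2$ into the region $\sigma_0<3/2$, $\sigma>1/2$, the only new singularities being those in \eqref{sing}. Off these, Stirling's formula gives $R_1=O(t^{-1}x^{1-\sigma})$ and $R_2=O(t^{\sigma_0-2}x^{2-\sigma-\sigma_0})$ when $s_0\neq1$, the identity \eqref{at_1} converting the latter into the logarithmic bound when $s_0=1$, and $R_3=O(t^{-1/2}x^{1/2-\sigma})$. I expect the bound for $R_3$ to be the main obstacle: on the line $\Re z=1/2$ the Gamma factors contribute a dangerous $e^{\pi t/2}$, which must be absorbed by the exponential decay of the remaining Gamma factor, and this is precisely where Lemma 4 of \cite{Mat-JNT} is needed.

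Finally I would set $x=t$ (legitimate: taking $C=2\pi$ makes the hypothesis $|t|\leq 2\pi x/C$ of Theorem \ref{T-3-1} hold) and let $N\to\infty$. With $x=t$, each of the three summands in the bound for $g(s_0,s;t)$ collapses to $O(t^{-\sigma})$, so $g(s_0,s;t)=O(t^{-\sigma})$ (respectively $O(t^{-\sigma}\log t)$ if $s_0=1$); likewise $Y_2+Y_3$ and $\sum_{m\geq1}E(s;t,m,N)m^{-s_0}$ each become $O(t^{1-\sigma_0-\sigma})$, $O(t^{-\sigma}\log t)$, or $O(t^{-\sigma})$ according as $\sigma_0<1$, $\sigma_0=1$, or $\sigma_0>1$; and $A_3,A_4\to0$ by \eqref{S_3} and \eqref{S_4}, while the leading double sum becomes $\sum_{m\geq1}\sum_{n\leq t}m^{-s_0}(m+n)^{-s}$. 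Adding the error contributions and comparing exponents, the dominant one is $O(t^{1-\sigma_0-\sigma})$ for $\sigma_0<1$, $O(t^{-\sigma}\log t)$ for $\sigma_0=1$, and $O(t^{-\sigma})$ for $\sigma_0>1$, which is exactly \eqref{5-8}; the excluded values $s=1$ and $s_0+s=2$ are precisely what keeps us clear of the factor $(1-s)^{-1}$ and of the first singularity in \eqref{sing}.
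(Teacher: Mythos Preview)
Your proposal is correct and follows essentially the same route as the paper: start from \eqref{suzumenooyado}, isolate the troublesome second term, split it via Euler--Maclaurin into $g+Y_2+Y_3$, continue $g$ across $\sigma_0+\sigma=2$ by the Mellin--Barnes argument of Lemma~\ref{L-5-2} (contour shift to $\Re z=1/2$, residues at $z=0$ and $z=s_0-1$, the $R_3$-bound via Lemma~4 of \cite{Mat-JNT}), then set $x=t$, let $N\to\infty$, and collect the errors. The identification of which term dominates in each $\sigma_0$-range is exactly the paper's.
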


\section{Proof of Theorem \ref{T-1-3}} \label{sec-5}

Based on these results, we finally give the proof of Theorem \ref{T-1-3}.

\begin{proof}[Proof of Theorem \ref{T-1-3}] 
We let ${s_0}\in \mathbb{C}$ with $1/2<{\sigma_0}<3/2$ and 
$s\in \mathbb{C}$ with $1/2<\sigma\leq 1$ and $3/2<{\sigma_0}+\sigma\leq 2$. 
We further assume that $s_0+s\neq 2$. 
Similarly to Section \ref{sec-4}, let 
$$\Sigma_1({s_0},s)=\sum_{m=1}^\infty \sum_{1\leq n \leq t}\frac{1}{m^{{s_0}}(m+n)^{s}}.$$
Then we can again obtain \eqref{mean-val} and denote it by $S_1T-S_2+S_3$.
As for $S_1$, we similarly set $k=m_1+n_1(=m_2+n_2)$. Then we can write 
\begin{align*}
S_1&= \sum_{k=2}^\infty \left(\sum_{m_1=1}^{k-1} \sum_{m_2=1}^{k-1}\frac{1}{m_1^{s_0}m_2^{\overline{s_0}}}\right)\frac{1}{k^{2\sigma}} -(U_1+U_2+U_3),
\end{align*}
where $\overline{U_1}=U_2$.  We have
\begin{align*}
U_1+U_3& \ll \sum_{m_1\geq 1\atop {m_2\geq 1}}\frac{1}{(m_1m_2)^{\sigma_0}} \sum_{n_1> T}\frac{1}{(m_1+n_1)^{2\sigma}}\\
&\ll \sum_{k=2}^\infty \frac{1}{k^{2\sigma}}\sum_{m_1=1 \atop k-m_1>T}^{k-1}\sum_{m_2=1}^{k-1}\frac{1}{(m_1m_2)^{\sigma_0}},
\end{align*}
where we set $k=m_1+n_1=m_2+n_2$. Note that from the condition $k-m_1>T$, we have $k>T$. Hence we obtain
\begin{align*}
U_1+U_3&\ll \sum_{k>T}\frac{1}{k^{2\sigma}}\int_{1}^{k}u^{-{\sigma_0}}du \int_{1}^{k}v^{-{\sigma_0}}dv\\
&\ll 
\begin{cases}
\sum_{k>T}k^{2-2\sigma-2{\sigma_0}}=O\left(T^{3-2\sigma-2{\sigma_0}}\right) & (\frac{1}{2}<{\sigma_0}<1)\\
\sum_{k>T}k^{-2\sigma}(\log k)^2=O\left(T^{1-2\sigma}(\log T)^2\right) & ({\sigma_0}=1)\\
\sum_{k>T}k^{-2\sigma}=O\left(T^{1-2\sigma}\right) & (1<\sigma_0<\frac{3}{2}),
\end{cases}
\end{align*}
because $2-2\sigma-2{\sigma_0}<-1$, and the same estimate holds for $\overline{U_1}+U_3$ and $U_3$. 
As for the second estimate we used the integration by parts for 
$$\sum_{k>T}k^{-2\sigma}(\log k)^2\ll\int_{T}^\infty u^{-2\sigma}(\log u)^2 du.$$
Therefore
\begin{equation}
S_1=\zeta_2^{[2]}({s_0},2\sigma)+
\begin{cases}
O\left(T^{3-2\sigma-2{\sigma_0}}\right) & (\frac{1}{2}<{\sigma_0}<1)\\
O\left(T^{1-2\sigma}(\log T)^2\right) & ({\sigma_0}=1)\\
O\left(T^{1-2\sigma}\right) & (1<\sigma_0<\frac{3}{2}).
\end{cases}
\label{5-9}
\end{equation}

Next we consider $S_2$. Using $M(n_1,n_2)=\max\{n_1,n_2,2\}$, we have 
\begin{align*}
S_2&= \sum_{m_1\geq 1 \atop m_2\geq 1}\frac{1}{m_1^{s_0}m_2^{\overline{s_0}}}\sum_{n_1\leq T \atop {n_2\leq T \atop m_1+n_1=m_2+n_2}}\frac{M(n_1,n_2)}{(m_1+n_1)^{2\sigma}}\\
&= \sum_{k=2}^\infty \frac{1}{k^{2\sigma}}\sum_{m_1\geq 1 \atop m_2\geq 1}\sum_{n_1\leq T \atop {n_2\leq T \atop {m_1+n_1=k \atop m_2+n_2=k}}}\frac{M(n_1,n_2)}{m_1^{s_0}m_2^{\overline{s_0}}}\\
& \ll \sum_{k\leq T} \frac{k}{k^{2\sigma}}\sum_{m_1=1}^{k-1} \sum_{m_2=1}^{k-1}\frac{1}{(m_1m_2)^{\sigma_0}}+\sum_{k> T} \frac{T}{k^{2\sigma}}\sum_{m_1=1}^{k-1} \sum_{m_2=1}^{k-1}\frac{1}{(m_1m_2)^{\sigma_0}}\\
& \ll 
\begin{cases}
\sum_{k\leq T} {k^{1-2\sigma}}\left(k^{1-{\sigma_0}}\right)^2+T\sum_{k> T} {k^{-2\sigma}}\left(k^{1-{\sigma_0}}\right)^2 &  (\frac{1}{2}<{\sigma_0}<1)\\
\sum_{k\leq T} {k^{1-2\sigma}}\left(\log k\right)^2+T\sum_{k> T} {k^{-2\sigma}}\left(\log k\right)^2 &  ({\sigma_0}=1)\\
\sum_{k\leq T} {k^{1-2\sigma}}+T\sum_{k> T} {k^{-2\sigma}} & (1<\sigma_0<\frac{3}{2}).
\end{cases}
\end{align*}
Therefore we obtain 
\begin{equation}
S_2=
\begin{cases}
O\left(T^{4-2{\sigma_0}-2\sigma}\right)&  
    (\frac{1}{2}<{\sigma_0}<1,\frac{1}{2}<\sigma\leq 1)\\
O\left(T^{2-2\sigma}(\log T)^2\right)&  ({\sigma_0}=1,\frac{1}{2}<\sigma<1)\\
O\left((\log T)^3\right)& ({\sigma_0}=1,\sigma=1)\\
O\left(T^{2-2\sigma}\right)& (1<\sigma_0<\frac{3}{2},\frac{1}{2}<\sigma<1),
\end{cases}
\label{5-10}
\end{equation}
where we have to note that $3/2<\sigma_0+\sigma<2$ in the first case, and
$\sigma\neq 1$ (because if $\sigma=1$ then $\sigma_0+\sigma>2$) in the fourth case.

Finally we consider $S_3$.
Similarly to the argument in Section \ref{sec-4}, we have
\begin{align*}
S_3& \ll \sum_{m_1,m_2\geq 1}\frac{1}{(m_1m_2)^{\sigma_0}}\sum_{n_1, n_2\leq T \atop m_1+n_1<m_2+n_2\leq 2(m_1+n_1)}\frac{1}{(m_1+n_1)^{\sigma}(m_2+n_2)^\sigma}\frac{1}{\log\frac{m_2+n_2}{m_1+n_1}}\\
& \ \ + \sum_{m_1,m_2\geq 1}\frac{1}{(m_1m_2)^{\sigma_0}}\sum_{n_1, n_2\leq T \atop m_2+n_2> 2(m_1+n_1)}\frac{1}{(m_1+n_1)^{\sigma}(m_2+n_2)^\sigma}\frac{1}{\log\frac{m_2+n_2}{m_1+n_1}},
\end{align*}
which we denote by $W_1+W_2$. 

First estimate $W_2$.    We have
\begin{align*}
W_2& \ll \sum_{m_1,m_2\geq 1}\frac{1}{(m_1m_2)^{\sigma_0}}\sum_{n_1, n_2\leq T \atop m_2+n_2> 2(m_1+n_1)}\frac{1}{(m_1+n_1)^{\sigma}(m_2+n_2)^\sigma}\\
&=\sum_{m_1\geq 1 \atop n_1\leq T}\frac{1}{m_1^{\sigma_0}(m_1+n_1)^{\sigma}}           
   \sum_{m_2\geq 1, n_2\leq T \atop m_2+n_2> 2(m_1+n_1)}\frac{1}                        
   {m_2^{\sigma_0}(m_2+n_2)^{\sigma}}\\
&=\sum_{m_1\geq 1 \atop n_1\leq T}\frac{1}{m_1^{\sigma_0}(m_1+n_1)^{\sigma}}
   \sum_{k>2(m_1+n_1)}\frac{1}{k^{\sigma}}\sum_{m_2\geq 1, n_2\leq T \atop
   m_2+n_2=k}\frac{1}{m_2^{\sigma_0}}\\
&= \sum_{m_1\leq T \atop n_1\leq T}+\sum_{m_1> T \atop n_1\leq T}=W_{21}+W_{22},
\end{align*}
say.
Consider $W_{22}$.    Since $m_1> T$, we have $k>2T$, so 
$m_2=k-n_2\geq k-T>k/2$.   Therefore the innermost sum of $W_{22}$ is
$$
\sum_{k-T\leq m_2\leq k-1}\frac{1}{m_2^{\sigma_0}}\ll T k^{-\sigma_0},
$$
and hence
\begin{align*}
W_{22}&\ll T\sum_{m_1> T \atop n_1\leq T}\frac{1}{m_1^{\sigma_0}(m_1+n_1)^{\sigma}}   
   \sum_{k>2(m_1+n_1)}k^{-\sigma_0-\sigma}\\
&\ll T\sum_{m_1> T}m_1^{-\sigma_0}\sum_{n_1\leq T}(m_1+n_1)^{1-\sigma_0-2\sigma}\\
&\leq T\sum_{m_1> T}m_1^{-\sigma_0}\sum_{n_1\leq T}m_1^{1-\sigma_0-2\sigma}\\
&\ll T^2 \sum_{m_1> T}m_1^{1-2\sigma_0-2\sigma}.
\end{align*}
Since $1-2\sigma_0-2\sigma<-1$, we have
\begin{align}\label{W_{22}}
W_{22}\ll T^2 T^{2-2\sigma_0-2\sigma}&=T^{4-2\sigma_0-2\sigma}.
\end{align}
As for $W_{21}$, we further divide the inner double sum of $W_{21}$ into two parts
$D_1$ and $D_2$ according to $2(m_1+n_1)<k\leq 2T$ and $k>2T$, respectively.
We handle the innermost sum of $D_2$ similarly to the case of $W_{22}$.   We have
\begin{align*}
D_2\ll T\sum_{k>2T}\frac{1}{k^{\sigma}}\frac{1}{k^{\sigma_0}}
   \ll T^{2-\sigma_0-\sigma}.
\end{align*}
The innermost sum of $D_1$ is
\begin{align*}
\ll\sum_{m_2\leq k-1}\frac{1}{m_2^{\sigma_0}}
\ll
\begin{cases}
k^{1-\sigma_0} & (\frac{1}{2}<\sigma_0<1)\\
\log k & (\sigma_0=1)\\
1 & (1<\sigma_0<\frac{3}{2}),
\end{cases}
\end{align*}
which gives
\begin{align*}
D_1\ll
\begin{cases} 
T^{2-\sigma_0-\sigma} & (\frac{1}{2}<\sigma_0<1, \frac{1}{2}<\sigma\leq 1)\\
T^{1-\sigma}\log T & (\sigma_0=1,\frac{1}{2}<\sigma<1)\\
(\log T)^2 & (\sigma_0=1,\sigma=1)\\
T^{1-\sigma} & (1<\sigma_0<\frac{3}{2}, \frac{1}{2}<\sigma<1).
\end{cases}
\end{align*}
Substituting the estimates of $D_1$ and $D_2$ into $W_{21}$, and estimating the
remaining sum
$$
\sum_{m_1\leq T \atop n_1\leq T}\frac{1}{m_1^{\sigma_0}(m_1+n_1)^{\sigma}}
\leq \sum_{m_1\leq T}\frac{1}{m_1^{\sigma_0}}\sum_{n_1\leq T}\frac{1}{n_1^{\sigma}}
$$
in the obvious way, we obtain
\begin{align}\label{W_{21}}
W_{21}&\ll
\begin{cases}
T^{4-2\sigma_0-2\sigma} & (\frac{1}{2}<\sigma_0<1,\frac{1}{2}<\sigma<1)\\
T^{2-2\sigma_0}\log T   & (\frac{1}{2}<\sigma_0<1,\sigma=1)\\
T^{2-2\sigma}(\log T)^2 & (\sigma_0=1,\frac{1}{2}<\sigma<1)\\
(\log T)^4              & (\sigma_0=1,\sigma=1)\\
T^{2-2\sigma}           & (1<\sigma_0<\frac{3}{2},\frac{1}{2}<\sigma<1).
\end{cases}
\end{align}

Next consider $W_1$. We have
\begin{align*}
W_1&= \sum_{m_1,m_2\geq 1}\frac{1}{(m_1m_2)^{\sigma_0}}\sum_{n_1,\,n_2\leq T\atop {m_1+n_1<m_2+n_2 \atop \leq 2(m_1+n_1)}}\frac{1}{(m_1+n_1)^\sigma(m_2+n_2)^\sigma}\frac{1}{\log\left(1+\frac{m_2+n_2-m_1-n_1}{m_1+n_1}\right)}\\
& \ll \sum_{m_1\geq 1}\sum_{n_1\leq T}\frac{1}{m_1^{\sigma_0}(m_1+n_1)^{2\sigma}}\sum_{n_2\leq T}\sum_{m_2\geq 1 \atop {m_1+n_1<m_2+n_2 \atop \leq 2(m_1+n_1)}}\frac{m_1+n_1}{m_2^{\sigma_0}(m_2+n_2-m_1-n_1)}\\
& =\sum_{m_1\leq 2T}\sum_{n_1\leq T}+\sum_{m_1> 2T}\sum_{n_1\leq T}=W_{11}+W_{12},
\end{align*}
say. Consider $W_{12}$. Since $m_1>2T$, we have $n_2\leq T<m_1/2$, so $m_2>m_1+n_1-n_2>m_1/2$. Therefore, setting $r=m_2+n_2-m_1-n_1$, we have
\begin{align}
W_{12} & \ll \sum_{m_1\geq 2T}\sum_{n_1\leq T}\frac{1}{m_1^{2\sigma_0}(m_1+n_1)^{2\sigma-1}}\sum_{n_2\leq T}\sum_{m_2\geq 1 \atop {m_1+n_1<m_2+n_2 \atop \leq 2(m_1+n_1)}}\frac{1}{(m_2+n_2-m_1-n_1)}\notag\\
& \ll \sum_{m_1\geq 2T}\sum_{n_1\leq T}\frac{1}{m_1^{2\sigma_0}(m_1+n_1)^{2\sigma-1}}\sum_{n_2\leq T}\sum_{r=1}^{m_1+n_1}\frac{1}{r}\notag\\
& \ll T\sum_{m_1\geq 2T}\sum_{n_1\leq T}\frac{1}{m_1^{2\sigma_0}(m_1+n_1)^{2\sigma-1}}\log(m_1+n_1)\notag\\
&\ll T\sum_{m_1>2T}m_1^{-2\sigma_0}\times                                               
\begin{cases}                                                                           
(m_1+T)^{2-2\sigma}\log(m_1+T) & (\frac{1}{2}<\sigma<1)\\                               
(\log(m_1+T))^2 & (\sigma=1)                                                            
\end{cases}                                                                             
\notag\\                                                                                
&\ll                                                                                    
\begin{cases}                                                                           
T^{4-2\sigma_0-2\sigma}\log T & (\frac{1}{2}<\sigma<1)\\                                
T^{2-2\sigma_0}(\log T)^2 & (\sigma=1).                                                 
\end{cases}                                                                     \label{W_{12}}        
\end{align}
Next, 
since $m_2<2(m_1+n_1)$, the innermost sum of $W_{11}$ is
\begin{align*}
&\ll \sum_{m_2<2(m_1+n_1)}m_2^{-\sigma_0}\log(m_1+n_1)\\
&\ll
\begin{cases}
(m_1+n_1)^{1-\sigma_0}\log(m_1+n_1) & (\frac{1}{2}<\sigma_0<1)\\
(\log(m_1+n_1))^2 & (\sigma_0=1)\\
\log(m_1+n_1) & (1<\sigma_0<\frac{3}{2}).
\end{cases}
\end{align*}
Therefore, when $\frac{1}{2}<\sigma_0<1$, we have
\begin{align*}
W_{11}&\ll \sum_{m_1\leq 2T \atop n_1\leq T}m_1^{-\sigma_0}
     (m_1+n_1)^{2-\sigma_0-2\sigma}\log(m_1+n_1)\\
&\ll \sum_{m_1\leq 2T} m_1^{-\sigma_0}(m_1+T)^{3-\sigma_0-2\sigma}\log(m_1+T)\\
&\ll T^{4-2\sigma_0-2\sigma}\log T,
\end{align*}
because $2<\sigma_0+2\sigma<3$.   Similarly, we have
\begin{align*}
W_{11}&\ll \sum_{m_1\leq 2T \atop n_1\leq T}m_1^{-1}(m_1+n_1)^{1-2\sigma}
   (\log(m_1+n_1))^2\\
&\ll
\begin{cases}
T^{2-2\sigma}(\log T)^3 & (\frac{1}{2}<\sigma<1)\\
(\log T)^4 & (\sigma=1)
\end{cases}
\end{align*}
when $\sigma_0=1$, and
\begin{align*}
W_{11}\ll\sum_{m_1\leq 2T \atop n_1\leq T}m_1^{-\sigma_0}(m_1+n_1)^{1-2\sigma}
    \log(m_1+n_1)
\ll T^{2-2\sigma}\log T
\end{align*}
when $1<\sigma_0<\frac{3}{2}$.
By \eqref{W_{22}}, \eqref{W_{21}}, \eqref{W_{12}} and the above estimates, we
now obtain
\begin{align}
S_3&=W_1+W_2\notag\\
&\ll
\begin{cases}
T^{4-2\sigma_0-2\sigma}\log T & (\frac{1}{2}<\sigma_0<1,\frac{1}{2}<\sigma<1)\\        
T^{2-2\sigma_0}(\log T)^2   & (\frac{1}{2}<\sigma_0<1,\sigma=1)\\                       
T^{2-2\sigma}(\log T)^3 & (\sigma_0=1,\frac{1}{2}<\sigma<1)\\                           
(\log T)^4              & (\sigma_0=1,\sigma=1)\\                                       
T^{2-2\sigma}\log T           & (1<\sigma_0<\frac{3}{2},\frac{1}{2}<\sigma<1).          
\end{cases}                                                                     \label{SSS}
\end{align}
Denote the right-hand side of the above by $\mathcal{E}(T)$.
Combining \eqref{5-9}, \eqref{5-10} and \eqref{SSS}, we obtain
\begin{align}\label{5-12}
\int_2^{T}|\Sigma_1({s_0},s)|^2dt=S_1T-S_2+S_3 &
=\zeta_2^{[2]}({s_0},2\sigma)T+O(\mathcal{E}(T)).
\end{align}
Now, using the Cauchy-Schwarz inequality, we estimate
the second term on the right-hand side of \eqref{4-2-2} with replacing $t^{-\sigma}$ by the error term 
on the right-hand side of \eqref{5-8}. 
Denoting by $E(t)$ the error term 
on the right-hand side of \eqref{5-8}, 
we have
\begin{align*}
& \int_{2}^{T}|\Sigma_1({s_0},s)|\,E(t)dt  \ll \left(\int_{2}^{T}|\Sigma_1({s_0},s)|^2 dt\right)^{1/2}\cdot \left(\int_{2}^{T}\,E(t)^2 dt\right)^{1/2}\\
& \ =
\left\{ O(T)+O(\mathcal{E}(T))\right\}^{1/2}\cdot O(1)^{1/2}\ll T^{1/2}.
\end{align*}
Thus we obtain the proof of Theorem \ref{T-1-3}.
\end{proof}

\

{\sc Acknowledgements.} \ 
 The authors express their gratitude to 
 Mr. Soichi Ikeda, Mr. Kaneaki Matsuoka, 
 Mr. Akihiko Nawashiro and Mr. Tomokazu Onozuka for pointing out 
 some inaccuracies included in the original version of the manuscript.

\

\bibliographystyle{amsplain}

\begin{thebibliography}{999}

\bibitem{AET}
S.~Akiyama, S.~Egami and Y.~Tanigawa, {Analytic continuation of multiple 
zeta-functions and their values at non-positive integers}, Acta Arith. 
\textbf{98} (2001), 107-116.

\bibitem{AK}
T.~Arakawa and M.~Kaneko, {Multiple zeta values, poly-Bernoulli numbers,
  and related zeta functions}, Nagoya Math. J. \textbf{153} (1999), 189--209.

\bibitem{Atk}
F. V. Atkinson, {The mean-value of the Riemann zeta function}, 
Acta Math. \textbf{81} (1949), 353-376.

\bibitem{BBG}
D.~Borwein, J.~M. Borwein, and R.~Girgensohn, {Explicit evaluation of
  Euler sums}, Proc. Edinburgh Math. Soc. \textbf{38} (1995), 277--294.

\bibitem{IM}
H. Ishikawa and K. Matsumoto, {On the estimation of the order of Euler-Zagier
multiple zeta-functions}, Illinois J. Math. \textbf{47} (2003), 1151-1166.

\bibitem{KT}
I. Kiuchi and Y. Tanigawa, {Bounds for double zeta-functions},
Ann. Sc. Norm. Sup. Pisa, Cl. Sci. Ser. V \textbf{5} (2006), 445-464.

\bibitem{KTZ}
I. Kiuchi, Y. Tanigawa and W. Zhai, {Analytic properties of double
zeta-functions}, Indag. Math. \textbf{21} (2011), 16-29.

\bibitem{KMT-Debrecen}
Y. Komori, K. Matsumoto and H. Tsumura,
{Functional equations and functional relations for the Euler double
zeta-function and its generalization of Eisenstein type}, Publ. Math. Debrecen
\textbf{77} (2010), 15-31.

\bibitem{M1}
K.~Matsumoto, {On the analytic continuation of various multiple
  zeta-functions}, In: Number Theory for the Millennium, II (Urbana, IL, 2000), A K Peters, Natick, MA, 2002, pp.~417--440.

\bibitem{Mat-JNT}
\bysame, {The analytic continuation and the asymptotic behaviour of 
  certain multiple zeta-functions I}, J. Number Theory \textbf{101} (2003),
  223--243.

\bibitem{Mat-NMJ}
\bysame, {Asymptotic expansions of double zeta-functions of Barnes, of 
  Shintani, and Eisenstein series}, Nagoya Math. J. \textbf{172} (2003),
  59--102.

\bibitem{Mat-Camb}
\bysame, {Functional equations for double zeta-functions},
Math. Proc. Cambridge Phil. Soc. \textbf{136} (2004), 1-7.

\bibitem{MT-RIMS}
K.~Matsumoto and H.~Tsumura, Mean value theorems for double zeta-functions, 
In: Analytic Number Theory -- Number Theory through Approximation and Asymptotics, RIMS, 2012, (ed. K. Chinen), RIMS 
K$\hat{\rm o}$ky$\hat{\rm u}$roku {\bf 1874}, RIMS, 2014, pp.~45-54.

\bibitem{Nak-Pan}
T. Nakamura and {\L}. Pa{\'n}kowski, {Any non-monomial polynomial of the Riemann zeta-function has complex zeros off the critical line}, preprint, arXiv:1212.5890.

\bibitem{Titch}
E.~C. Titchmarsh, {The {T}heory of the {R}iemann {Z}eta-function, {S}econd
  {E}dition, {E}dited and with a preface by {D}. {R}. {H}eath-{B}rown}, The
  Clarendon Press, Oxford University Press, New York, 1986.

\end{thebibliography}

\end{document}